\begin{document}

\newtheorem{theorem}{Theorem}
\newtheorem{lemma}[theorem]{Lemma}
\newtheorem{claim}[theorem]{Claim}
\newtheorem{cor}[theorem]{Corollary}
\newtheorem{prop}[theorem]{Proposition}
\newtheorem{definition}[theorem]{Definition}
\newtheorem{question}[theorem]{Question}
\newcommand{\hh}{{{\mathrm h}}}

\numberwithin{equation}{section}
\numberwithin{theorem}{section}
%\numberwithin{definition}{section}

\def\sssum{\mathop{\sum\!\sum\!\sum}}
\def\ssum{\mathop{\sum\ldots \sum}}

\def \balpha{\boldsymbol\alpha}
\def \bbeta{\boldsymbol\beta}
\def \bgamma{{\boldsymbol\gamma}}
\def \bomega{\boldsymbol\omega}

\def\sssum{\mathop{\sum\!\sum\!\sum}}
\def\ssum{\mathop{\sum\ldots \sum}}
\def\dsum{\mathop{\sum\  \sum}}
\def\iint{\mathop{\int\ldots \int}}

\def\squareforqed{\hbox{\rlap{$\sqcap$}$\sqcup$}}
\def\qed{\ifmmode\squareforqed\else{\unskip\nobreak\hfil
\penalty50\hskip1em\null\nobreak\hfil\squareforqed
\parfillskip=0pt\finalhyphendemerits=0\endgraf}\fi}%%

%  use the AMS-Euler Fraktur fonts
%%%%%%%%%%%%%%%%%%%%%%%%%%%%%%%%%%
\newfont{\teneufm}{eufm10}
\newfont{\seveneufm}{eufm7}
\newfont{\fiveeufm}{eufm5}
%%%%%%%%%%%%%%%%%%%%%%%%%%%%%%%%%
%
%  allow automatic size selection in math mode
%
%%%%%%%%%%%%%%%%%%%%%%%%%%%%%%%%%
\newfam\eufmfam
     \textfont\eufmfam=\teneufm
\scriptfont\eufmfam=\seveneufm
     \scriptscriptfont\eufmfam=\fiveeufm
%%%%%%%%%%%%%%%%%%%%%%%%%%%%%%%%%
%
%  \frak works on a single symbol at a time...
%
\def\frak#1{{\fam\eufmfam\relax#1}}

\def\fK{\mathfrak K}
\def\fT{\mathfrak{T}}

\def\fA{{\mathfrak A}}
\def\fB{{\mathfrak B}}
\def\fC{{\mathfrak C}}
\def\fD{{\mathfrak D}}

\newcommand{\sX}{\ensuremath{\mathscr{X}}}

\def\eqref#1{(\ref{#1})}

\def\vec#1{\mathbf{#1}}
\def\dist{\mathrm{dist}}
\def\vol#1{\mathrm{vol}\,{#1}}

\def\squareforqed{\hbox{\rlap{$\sqcap$}$\sqcup$}}
\def\qed{\ifmmode\squareforqed\else{\unskip\nobreak\hfil
\penalty50\hskip1em\null\nobreak\hfil\squareforqed
\parfillskip=0pt\finalhyphendemerits=0\endgraf}\fi}

\def\sA{\mathscr A}
\def\sB{\mathscr B}
\def\sC{\mathscr C}
\def\sD{\Delta}
\def\sE{\mathscr E}
\def\sF{\mathscr F}
\def\sG{\mathscr G}
\def\sH{\mathscr H}
\def\sI{\mathscr I}
\def\sJ{\mathscr J}
\def\sK{\mathscr K}
\def\sL{\mathscr L}
\def\sM{\mathscr M}
\def\sN{\mathscr N}
\def\sO{\mathscr O}
\def\sP{\mathscr P}
\def\sQ{\mathscr Q}
\def\sR{\mathscr R}
\def\sS{\mathscr S}
\def\sU{\mathscr U}
\def\sT{\mathscr T}
\def\sV{\mathscr V}
\def\sW{\mathscr W}
\def\sX{\mathscr X}
\def\sY{\mathscr Y}
\def\sZ{\mathscr Z}

%%%%%%%%%%%%%%%%%%%%%%%%%
% Alphabet calligraphie %
%%%%%%%%%%%%%%%%%%%%%%%%%
\def\cA{{\mathcal A}}
\def\cB{{\mathcal B}}
\def\cC{{\mathcal C}}
\def\cD{{\mathcal D}}
\def\cE{{\mathcal E}}
\def\cF{{\mathcal F}}
\def\cG{{\mathcal G}}
\def\cH{{\mathcal H}}
\def\cI{{\mathcal I}}
\def\cJ{{\mathcal J}}
\def\cK{{\mathcal K}}
\def\cL{{\mathcal L}}
\def\cM{{\mathcal M}}
\def\cN{{\mathcal N}}
\def\cO{{\mathcal O}}
\def\cP{{\mathcal P}}
\def\cQ{{\mathcal Q}}
\def\cR{{\mathcal R}}
\def\cS{{\mathcal S}}
\def\cT{{\mathcal T}}
\def\cU{{\mathcal U}}
\def\cV{{\mathcal V}}
\def\cW{{\mathcal W}}
\def\cX{{\mathcal X}}
\def\cY{{\mathcal Y}}
\def\cZ{{\mathcal Z}}
\newcommand{\rmod}[1]{\: \mbox{mod} \: #1}

\def\vr{\mathbf r}

\def\e{{\mathbf{\,e}}}
\def\ep{{\mathbf{\,e}}_p}
\def\em{{\mathbf{\,e}}_m}
\def\en{{\mathbf{\,e}}_n}

\def\Tr{{\mathrm{Tr}}}
\def\Nm{{\mathrm{Nm}}}

 \def\SS{{\mathbf{S}}}

\def\lcm{{\mathrm{lcm}}}

\def\({\left(}
\def\){\right)}
\def\fl#1{\left\lfloor#1\right\rfloor}
\def\rf#1{\left\lceil#1\right\rceil}

\def\mand{\qquad \mbox{and} \qquad}

\newcommand{\commG}[1]{\marginpar{%
\begin{color}{red}
\vskip-\baselineskip %raise the marginpar a bit
\raggedright\footnotesize
\itshape\hrule \smallskip G: #1\par\smallskip\hrule\end{color}}}

\newcommand{\commI}[1]{\marginpar{%
\begin{color}{blue}
\vskip-\baselineskip %raise the marginpar a bit
\raggedright\footnotesize
\itshape\hrule \smallskip I: #1\par\smallskip\hrule\end{color}}}

\newcommand{\commII}[1]{\marginpar{%
\begin{color}{magenta}
\vskip-\baselineskip %raise the marginpar a bit
\raggedright\footnotesize
\itshape\hrule \smallskip I: #1\par\smallskip\hrule\end{color}}}

%%%%%%%%%%%%%%%%%%%%%%%%%%%%%%%%%%%%%%%%%%%%%%%%%%%%%%%%
%%%%%%%%%%%%%%%%%%%%%%%%%%%%%%%%%%%%%%%%%%%%%%%%%%%%%%%%
%%%%%%%%%%%%%%%%%%%%%%%%%%%%%%%%%%%%%%%%%%%%%%%%%%%%%%%%
%%%%%%%%%%%%%%%%%%%%%%%%%%%%%%%%%%%%%%%%%%%%%%%%%%%%%%%%

%%%%%%%  END OF STANDARD STUFF %%%%%%%%%

%%%%%%%%%%%%%%%%%%%%%%%%%%%%%%%%%%%%%%%%%%%%%%%%%%%%%%%%
%%%%%%%%%%%%%%%%%%%%%%%%%%%%%%%%%%%%%%%%%%%%%%%%%%%%%%%%
%%%%%%%%%%%%%%%%%%%%%%%%%%%%%%%%%%%%%%%%%%%%%%%%%%%%%%%%
%%%%%%%%%%%%%%%%%%%%%%%%%%%%%%%%%%%%%%%%%%%%%%%%%%%%%%%
%%%%%%%%%%%
%%% Spell

\hyphenation{re-pub-lished}

\parskip 4pt plus 2pt minus 2pt
%% \parskip= 2 pt plus 3pt

%% \parindent 0 pt

%\mathsurround=1pt

\def\bfdefault{b}
\overfullrule=5pt

\def \F{{\mathbb F}}
\def \K{{\mathbb K}}
\def \Z{{\mathbb Z}}
\def \Q{{\mathbb Q}}
\def \R{{\mathbb R}}
\def \C{{\\mathbb C}}
\def\Fp{\F_p}
\def \fp{\Fp^*}

\title[Trilinear  and Quadrilinear Exponential Sums]{Bounds of Trilinear and Quadrilinear Exponential Sums}

 \author[G. Petridis] {Giorgis Petridis}

\address{Department of Mathematics, University of Georgia, 
Athens, GA 30602, USA}
\email{giorgis@cantab.net}

 \author[I. E. Shparlinski] {Igor E. Shparlinski}

\address{Department of Pure Mathematics, University of New South Wales,
Sydney, NSW 2052, Australia}
\email{igor.shparlinski@unsw.edu.au}

\begin{abstract} We use an estimate  of Aksoy Yazici, Murphy, Rudnev and 
Shkredov (2016)  on the number of solutions of certain equations involving products and differences of sets in prime finite fields 
to give an explicit upper bound on trilinear exponential sums which improves the previous bound of Bourgain 
and Garaev (2009). We also obtain explicit bounds for quadrilinear exponential sums. 
\end{abstract}

\keywords{trilinear  exponential sums, additive energy of differences}
\subjclass[2010]{11B30, 11L07, 11T23}

\maketitle

\section{Introduction}

\subsection{Background} 

Let $p$ be a prime and let $\F_p$ be the finite field of $p$ elements.
Now given three sets $\cX, \cY, \cZ \subseteq \F_p$, and three sequences of  weights
 $\alpha= (\alpha_{x})_{x\in \cX}$, $\beta = \( \beta_{y}\)_{y \in \cY}$ and $\gamma =    \(\gamma_{z}\)_{z \in \cZ}$
 supported on
$\cX$,  $\cY$ and 
$\cZ$, respectively, we consider exponential sums
\begin{equation}
\label{eq:Sum S}
S(\cX, \cY, \cZ;\alpha, \beta, \gamma) = \sum_{x \in\cX} \sum_{y \in \cY}
 \sum_{z\in \cZ}\alpha_{x} \beta_{y} \gamma_{z}\ep(xyz) ,  
\end{equation}
where $\ep(z) = \exp(2 \pi i z/p)$.  
We recall that the bilinear analogues of  these sums are classical and have been studied in several papers, in which case  
for any sets $\cX, \cY \subseteq \F_p$ and any  $\alpha= (\alpha_{x})_{x\in \cX}$, $\beta = \( \beta_{y}\)_{y \in \cY}$, 
with 
$$
\sum_{x\in \cX}|\alpha_{x}|^2 = A \mand  \sum_{y \in \cY}|\beta_{y}|^2 = B, 
$$
we have 
\begin{equation}
\label{eq:bilin}
\left |\sum_{x \in \cX}\sum_{y \in \cY} \alpha_{x} \beta_{y}  \ep(xy) \right| \le \sqrt{pAB}, 
\end{equation}
see, for example,~\cite[Equation~1.4]{BouGar} or~\cite[Lemma~4.1]{Gar2}.

The trilinear  sums~\eqref{eq:Sum S} have been introduced and  estimated by Bourgain and Garaev~\cite{BouGar}. 
In particular, for and sets $\cX, \cY, \cZ \subseteq \F_p$ of cardinalities 
$$
\# \cX = X, \qquad \# \cY = Y, \qquad \# \cZ = Z,
$$
and  weights with 
$$
\max_{x \in \cX} |\alpha_{x}| \le 1, \qquad 
\max_{y \in   \cY}  |\beta_{y}| \le 1, \qquad 
\max_{z \in   \cZ}   |\gamma_{z}| \le 1,
$$ 
by~\cite[Theorem~1.2]{BouGar} we have
\begin{equation}
\label{eq:BG}
\left| S(\cX, \cY, \cZ;\alpha, \beta,\gamma)\right| \le (XYZ)^{13/16}p^{5/18 + o(1)}, 
\end{equation}
as $p\to \infty$. 
The bound~\eqref{eq:BG} has been generalised and extended in various directions, 
see~\cite{Bou1, Bou2, BouGlib,Gar2, Ost}. In particular, Bourgain~\cite{Bou1} has obtained 
a bound on multilinear sums
\begin{equation}
\label{eq:Bourg}
\left|\sum_{x_1 \in\cX_1}  \ldots  \sum_{x_n\in \cX_n}  \ep(x_1 \ldots x_n)\right| \le  X_1 \ldots X_n p^{-\eta}
\end{equation}
under an optimal condition 
$$
\min_{i=1, \ldots, n} X_i \ge  p^\delta \mand \prod_{i=1}^n X_i  \ge p^{1+\delta}, 
$$
of the sizes $X_i = \cX_i$, $i=1, \ldots, n$,  of the sets involved, where $\eta > 0$ 
depends only on the arbitrary parameter $\delta > 0$.
The interest to  multilinear exponential sums partially comes from applications to
exponential sums over subgroups of small order over finite fields, which has been used in the celebrated work of  
 Bourgain, Glibichuk and Konyagin~\cite{BGK}. 
We also note that several consecutive applications of the Cauchy inequality allow us to 
reduce general multilinear sums to sums without weights, see, for example, Lemma~\ref{lem:ExpSum}
below. 
Furthermore, the bound~\eqref{eq:Bourg} has been extended to arbitrary finite fields 
by Bourgain and Glibichuk~\cite{BouGlib}, see also~\cite{Bou2, Ost}. 
However, prior to the present work, the bound~\eqref{eq:BG}  has been the best 
known explicit bound  on the sums~\eqref{eq:Sum S}. 

We also note that  Hegyv{\'a}ri~\cite[Theorem~3.1]{Heg} has given estimates of 
some multilinear sums, without weights, over sets
of special additive structure (for sets with small difference sets).

Here we use a different approach to estimating the sums~\eqref{eq:Sum S}
and a recent result of  Aksoy Yazici, Murphy, Rudnev and 
Shkredov~\cite{AYMRS}  to obtain a different bound, which, in particular, improves~\eqref{eq:BG} when $p \to \infty$.

Furthermore, using a slightly different approach, based on another result of  Aksoy Yazici, Murphy, Rudnev and 
Shkredov~\cite{AYMRS} (see also~\cite{Pet}) we consider sums 
with more general weights to which the above approach does not apply. We also consider their multivariate versions.
In particular, given  $n$ sets $\cX_i \subseteq \F_p$, 
and also $n$ sequences of weights
$\omega_i= (\omega_i(\vec{x}))_{\vec{x} \in \F_p^n}$ such that $\omega_i(\vec{x})$ does not 
depend on the $i$th coordinate of the vector $\vec{x} = (x_1, \ldots, x_n) \in \F_p^n$, $i=1, \ldots,n$, 
we consider multilinear exponential sums
\begin{equation}
\begin{split}
\label{eq:Sum T}
T(\cX_1, \ldots, \cX_n;  \omega_1&, \ldots, \omega_n)\\
& = \ssum_{\vec{x} \in\cX_1\times \ldots \times \cX_n} 
\omega_1(\vec{x}) \ldots \omega_n(\vec{x}) \ep(x_1 \ldots x_n).
\end{split}
\end{equation}
As we have mentioned,  if the strength of the bound is not of concern but only the range 
of non-triviality is  important, then Bourgain~\cite{Bou1}  provides an optimal 
result~\eqref{eq:Bourg} for sums with constant weights, which can be extended to 
sums~\eqref{eq:Sum T} with several consecutive applications of the Cauchy inequality
as in Lemma~\ref{lem:ExpSum} below.  However the saving in such a bound is rather 
small, while here we are interested in stronger and more explicit bounds.

Although we estimate the sums~\eqref{eq:Sum T} only for $n=3$ and $n=4$ we develop some 
tools in Section~\ref{sec:ExpEnergy} in full generality, which may be of use if the results 
of Section~\ref{sec:AddComb} get eventually extended to equations with more sets and variables.

Our method is based on a upper bound of Rudnev~\cite{Rud} on the number of incidences between a set of points and a set of planes in $\F_p^3$. Rudnev's paper, which undoubtedly will find many more applications, stems from the  Guth and Katz~\cite{GuKat2} solution to the Erd\H{o}s distinct distance problem for planar sets and the Klein--Pl\"ucker line geometry formalism~\cite[Chapter~2]{PotWal}. So Rudnev's  work~\cite{Rud} indirectly depends on classical techniques such as the  polynomial method (see~\cite{Guth} and~\cite[Chapter~9]{TaoVu}) and properties of ruled surfaces (see~\cite{Katz}) and the Klein quadric (see~\cite{RudSel}). A more detailed discussion can be found in the beginning of 
Section~\ref{sec:AddComb}.

We also illustrate potential applications of our results on a example of a certain question from additive 
combinatorics complementing those of 
S{\'a}rk{\"o}zy~\cite{Sark} on nonlinear equations with variables from 
arbitrary sets  and of Aksoy Yazici, Murphy, Rudnev and 
Shkredov~\cite{AYMRS} on sizes of polynomial images. 
In fact both are closely related and also both can be approached via the idea 
of Garaev~\cite{Gar1} which links multilinear exponential sums,  equation 
with variables from arbitrary sets and sums-product type results (see
 also~\cite{Heg} for some other applications of this idea).  So it is not surprising 
that our results fit well into this approach, see Section~\ref{sec:appl}. More precisely, 
our bounds of  multilinear exponential sums, allow to derive results for multifold sums 
and products, which have recently become a subject of  quite active investigation. 
For example, to put our results of Section~\ref{sec:appl} in a proper context, we present 
one of the  bounds of Roche-Newton, Rudnev and  Shkredov~\cite{RNRS}. 
Namely, by~\cite[Corollary~12]{RNRS} we have 
$$
\(\#(\cA + \cA +\cA)\)^4 \(\#(\cA \cA)\)^9 \ge \(\#\cA \)^{16}
$$
for any set $\cA \subseteq \F_p$ with $\#\cA  = O\(p^{18/35}\)$
(we refer to~\eqref{eq:sumdiffprod} for the definition sum and 
product sets); see also~\eqref{eq:ABC Fq1},  \eqref{eq:ABC Fq2} and~\eqref{eq:2diff3} below. 

Finally, we recall that the bound~\eqref{eq:bilin} has  a full analogue for sums 
with a nontrivial multiplicative characters $\chi$ of $\F_p^*$ with two sets 
 $\cX, \cY \subseteq \F_p$.
Chang~\cite{Chang} has given a better bound if one of these sets has a small sum set.
 More recently, 
Hanson~\cite{Han}, also using methods of additive 
combinatorics, 
has obtained a series of results which apply to trilinear character sums.
Shkredov and Volostnov~\cite{ShkVol} have given further 
improvements  of the results of~\cite{Chang,Han}. Unfortunately our approach does 
not seem to apply to multiplicative character sums.

\subsection{Notation}

We always use the letter $p$ to denote a prime number and  use the letter $q$ to denote a  prime power. 

Before we formulate our results, we recall that the notations $U = O(V)$,  $U \ll V$  and $V \gg U$ are all
equivalent to the assertion that the inequality $|U|\le c|V|$ holds for some
constant $c>0$.  
We also write $U \asymp V$ if $U \ll V \ll U$. 
Throughout the paper, the implied constants in the symbols `$O$',  `$\ll$' 
and `$\gg$'  are absolute.

\subsection{New bounds of exponential sums} 

We note that to simplify the results and exposition we assume that $0$ is 
excluded from the sets under consideration.
This changes the absolute value of, say, the sum 
$S(\cX, \cY, \cZ ;  \alpha, \beta, \gamma)$   by at most  $O(XY + XZ + YZ)$. 

\begin{theorem}
\label{thm:Bound S3}
For any sets $\cX, \cY, \cZ \subseteq \F_p^*$ of cardinalities $X, Y, Z$, respectively, with 
$X \ge Y \ge Z$
and any weights 
$\alpha= (\alpha_{x})$,  $\beta = (\beta_{y})$ and $\gamma=(\gamma_{z})$
with
$$
\max_{x \in \cX} |\alpha_{x}| \le 1, \qquad 
\max_{y \in   \cY}  |\beta_{y}| \le 1, \qquad 
\max_{z \in   \cZ}   |\gamma_{z}| \le 1,
$$ 
we have
$$
S(\cX, \cY, \cZ ;  \alpha, \beta, \gamma)    \ll    
 p^{1/4} X^{3/4} Y^{3/4} Z^{7/8} . 
$$
\end{theorem}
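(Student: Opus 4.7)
The plan is to apply Cauchy--Schwarz so as to collapse the trilinear sum to a bilinear problem whose fourth moment is bounded by an AYMRS-type incidence estimate. First I would apply Cauchy--Schwarz with respect to the $y$-variable:
$$|S|^2 \le Y \sum_{y \in \cY} \left|\sum_{x \in \cX,\, z \in \cZ} \alpha_x \gamma_z \ep(xyz)\right|^2,$$
then expand the square and swap the order of summation. The outer sum over $y$ produces $\sum_{y \in \cY} \ep(y(x_1 z_1 - x_2 z_2))$, and I would split the resulting expression into the diagonal part (where $x_1 z_1 = x_2 z_2$), contributing at most $Y \cdot \#\{x_1 z_1 = x_2 z_2\} \ll X Y Z^2$ by the trivial multiplicative-energy bound combined with $X \ge Z$, and the off-diagonal part $\sum_{m \ne 0} B(m) \sigma(m)$, where $B(m)$ is the weighted count of quadruples $(x_1, z_1, x_2, z_2)$ with $x_1 z_1 - x_2 z_2 = m$ and $\sigma(m) = \sum_{y \in \cY} \ep(ym)$.

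Next, a second Cauchy--Schwarz together with the orthogonality identity $\sum_{m \in \F_p} |\sigma(m)|^2 = pY$ bounds the off-diagonal by $\sqrt{pY \sum_m |B(m)|^2}$. The key step is then to bound $\sum_m |B(m)|^2$, after bounding the weights trivially, by the number of solutions to the product--difference equation
$$x_1 z_1 - x_2 z_2 = x_3 z_3 - x_4 z_4, \qquad x_i \in \cX,\; z_i \in \cZ.$$
This is exactly the quantity controlled by the Aksoy Yazici--Murphy--Rudnev--Shkredov estimate: I would cast the eight-variable equation as a point--plane incidence problem in $\F_p^3$ (parameterising three of the variables as the coordinates of a point and the remaining five as parameters of a family of planes) and invoke Rudnev's point--plane incidence theorem, obtaining in the relevant regime an estimate of the form $\sum_m |B(m)|^2 \ll X^3 Z^{7/2}$. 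Plugging back yields $|S|^2 \ll X Y^2 Z^2 + p^{1/2} X^{3/2} Y^{3/2} Z^{7/4}$, and since $X \le p$ the second term dominates in the regime where the target is non-trivial, so extracting the square root gives the desired estimate.

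The main obstacle will be the AYMRS step. Casting the eight-variable product--difference equation as a point--plane incidence problem requires a careful choice of coordinates so that the quadratic condition $xz$ lifts to a linear constraint in an extra coordinate, and one must then verify the non-degeneracy hypothesis of Rudnev's theorem (no line is contained in too many planes). Crucially, the asymmetric exponents $X^3 Z^{7/2}$, rather than a symmetric $(XZ)^{5/2}$-type bound, reflect the asymmetry between the ``generic'' and the ``structured'' directions in the incidence problem, and it is this asymmetry that produces the final $X^{3/4} Z^{7/8}$ factors of the target bound.
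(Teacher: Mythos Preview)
Your approach is correct in outline but follows a genuinely different route from the paper's proof. The paper applies Cauchy--Schwarz in the \emph{opposite} direction: it isolates the inner sum over $y$, obtaining
\[
|S|^2 \le XZ \sum_{x\in\cX}\sum_{z\in\cZ}\Bigl|\sum_{y\in\cY}\beta_y\ep(xyz)\Bigr|^2
= XZ \sum_{x,z,y_1,y_2}\beta_{y_1}\overline{\beta_{y_2}}\,\ep\bigl(xz(y_1-y_2)\bigr),
\]
then groups by $\lambda=z(y_1-y_2)$ and applies the bilinear bound~\eqref{eq:bilin} to the remaining sum in $(\lambda,x)$. The key combinatorial input is thus the \emph{six}-variable count $N(\cZ,\cY,\cY)=\#\{z_1(y_1-y_2)=z_2(y_3-y_4)\}$, which is precisely the quantity bounded by Lemma~\ref{lem:Collisions} (a direct citation of AYMRS). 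Your Cauchy--Schwarz over $y$ instead produces the \emph{eight}-variable count $\#\{x_1z_1-x_2z_2=x_3z_3-x_4z_4\}$, which is \emph{not} the AYMRS quantity; you must set up a fresh point--plane incidence (e.g.\ points $(z_1,x_2,x_3)\in\cZ\times\cX^2$, planes indexed by $(x_1,z_2,z_3,x_4,z_4)$) and verify Rudnev's hypotheses yourself. Doing so does yield the claimed $X^3Z^{7/2}$ provided $X^2Z\ll p^2$, so the argument goes through. The paper's route is more economical because it lands directly on a quantity already in the literature; yours re-derives a new incidence estimate but reaches the same final bound.

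Two small corrections: the diagonal contribution to $|S|^2$ is $Y\cdot\sigma(0)\cdot|B(0)|\le XY^2Z^2$, not $XYZ^2$ (you dropped a factor of $Y$ mid-argument, though your final bound is right); and you should make explicit that Rudnev's theorem requires $|P|=X^2Z\ll p^2$, disposing of the complementary range $X^2Z\gg p^2$ via the bilinear bound $|S|\le p^{1/2}X^{1/2}Y^{1/2}Z$, which is easily seen to be absorbed by $p^{1/4}X^{3/4}Y^{3/4}Z^{7/8}$ there.
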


Theorem~\ref{thm:Bound S3} in nontrivial and improves the bound 
\begin{equation}
\label{eq:trilin triv}
| S(\cX, \cY, \cZ ;  \alpha, \beta, \gamma) |  \le p^{1/2}X^{1/2} Y^{1/2} Z,
\end{equation}
which is instant from~\eqref{eq:bilin}, if 
$$
X^{1/4} Y^{1/4} Z^{1/8} \gg p^{1/4} \mand X^{1/4} Y^{1/4} Z^{-1/8} \ll p^{1/4}.
$$
We rewrite these inequalities as 
$$
pZ^{1/2} \gg X  Y \gg pZ^{-1/2}.
$$
In particular if $X = Y = Z $, then 
$$
S(\cX, \cY, \cZ ;  \alpha, \beta, \gamma) \ll p^{1/4} X^{19/8},
$$
which is nontrivial for $X \ge  p^{2/5}$. This range of non-triviality is inferior to that obtained by Bourgain~\cite{Bou1}.

It is easy to see that Theorem~\ref{thm:Bound S3} improves~\eqref{eq:BG} for all 
cardinalities $X,Y,Z$ when $p \to \infty$, since without loss of generality we can assume that 
$X \ge Y \ge Z$ and then we have
\begin{align*}
 p^{1/4} X^{3/4} Y^{3/4} Z^{7/8}
 & \le  p^{5/18} X^{3/4} Y^{3/4} Z^{7/8} \\
 & =  p^{5/18} (XYZ)^{13/16} (Z/(XY))^{1/16}\\
 & \le p^{5/18} (XYZ)^{13/16}.
\end{align*}

Also, to indicate the strength of Theorem~\ref{thm:Bound S3} we compare it with the  bound  of Hegyv{\'a}ri~\cite[Corollary~2]{Heg}.
First we note that the bound of multilinear sums from~\cite[Theorem~3.1]{Heg} makes use of only three 
sets and does not improve with the number of sets. Now for $X \asymp Y \asymp p^{1/2}$ and $Z \ge p^{3/8}$,
ignoring very restrictive additive conditions on the sets $\cX,$ and $\cY$, the bound of~\cite[Corollary~2]{Heg}
takes form $O\(p^{3/16} XYZ^{1/2}\) = O\(p^{19/16} Z^{1/2}\)$. Unlike Theorem~\ref{thm:Bound S3}, the set $\cZ$ need not be the smallest set. Once $\cZ$ is assumed to be the smallest set, then Theorem~\ref{thm:Bound S3} gives the bound $O\(p Z^{7/8}\)$, which is stronger in the range $Z < p^{1/2}$ (in which case $\cZ$  is the smallest set anyway).

We also  compare  Theorem~\ref{thm:Bound S3}  with the bound  of 
Garaev~\cite[Theorem~4.2]{Gar2}, 
which work trilinear sums over arbitrary sets (but without weights). More precisely, 
by~\cite[Theorem~4.2]{Gar2},
if $XY\gg p$ then
$$
 \left| \sum_{x \in\cX} \sum_{y \in \cY}
 \sum_{z\in \cZ} \ep(xyz)  \right| \le XYZ^{539/540 + o(1)},
 $$ 
 while Theorem~\ref{thm:Bound S3} applies to weighted sums and yields (under the same condition $XY\gg p$)  the bound of the form 
  $O\(p^{1/4} X^{3/4} Y^{3/4} Z^{7/8}\) = O\(XY Z^{7/8}\)$.

As yet another evidence of the  efficiency of our approach and the strength of  Theorem~\ref{thm:Bound S3} 
we note that one can easily recover the second bound of~\cite[Corollary~19]{RNRS}.

We now obtain a bound of quadrilinear  analogues of sums~\eqref{eq:Sum S}
$$
S(\cW, \cX, \cY, \cZ;\alpha, \beta, \gamma, \delta) =   \sum_{w \in\cW} \sum_{x \in\cX} \sum_{y \in \cY}
 \sum_{z\in \cZ}\alpha_{w} \beta_{x} \gamma_{y}\delta_{z}\ep(wxyz) 
$$
with four sets $\cW, \cX, \cY, \cZ \subseteq \F_p^*$, and  weights
 $\alpha= (\alpha_{w})_{w\in \cW}$, $\beta = \( \beta_{x}\)_{x \in \cX}$,  $\gamma =    \(\gamma_{y}\)_{y \in \cY}$
 and $ \delta=    \(\delta_{z}\)_{z\in \cZ}$
 supported on $\cW$, 
$\cX$,  $\cY$ and 
$\cZ$, respectively.

To simplify the exposition we now assume that all cardinalities are less than $p^{2/3}$.

\begin{theorem}
\label{thm:Bound S4}
For any sets $\cW, \cX, \cY, \cZ \subseteq \F_p^*$ of cardinalities $W, X, Y, Z$, respectively, with 
$$
p^{2/3} \ge W \ge X \ge Y \ge Z ,
$$
and any weights 
$\alpha= (\alpha_{x})$,  $\beta = (\beta_{y})$,  $\gamma=(\gamma_{z})$  and $ \delta=    \(\delta_{z}\)$ 
with
$$
\max_{w \in \cW} |\alpha_{w}| \le 1, \qquad 
\max_{x \in \cX} |\beta_{x}| \le 1, \qquad 
\max_{y \in   \cY}  |\gamma_{y}| \le 1, \qquad 
\max_{z \in   \cZ}   |\delta_{z}| \le 1,
$$ 
we have
$$
S(\cW, \cX, \cY, \cZ;\alpha, \beta, \gamma, \delta)  \ll    
p^{1/8} W^{7/8} X^{7/8} Y^{15/16}  Z^{15/16}  . 
$$
\end{theorem}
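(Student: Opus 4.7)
My proposed proof reduces the quadrilinear sum to a trilinear one via Cauchy--Schwarz and then invokes Theorem~\ref{thm:Bound S3} together with an energy estimate of Aksoy Yazici--Murphy--Rudnev--Shkredov type. Singling out the smallest variable $z$, I write
$$
S = \sum_{z \in \cZ} \delta_z T_z, \qquad T_z = \sum_{w,x,y} \alpha_w \beta_x \gamma_y \ep(z \cdot wxy),
$$
and apply Cauchy--Schwarz in $z$ to obtain $|S|^2 \le Z \sum_{z \in \cZ} |T_z|^2$. For each $z \in \F_p^*$, the substitution $w \mapsto w/z$ turns $T_z$ into a weighted trilinear exponential sum over sets of sizes $W \ge X \ge Y$ with weights still bounded by $1$ in modulus, so Theorem~\ref{thm:Bound S3} gives the pointwise bound $|T_z| \ll p^{1/4} W^{3/4} X^{3/4} Y^{7/8}$.

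Using this pointwise estimate inside the Cauchy--Schwarz inequality produces a bound of the shape $|S| \ll Z \cdot p^{1/4} W^{3/4} X^{3/4} Y^{7/8}$, which already matches the claim when $Z$ is small but loses roughly a factor of $p^{1/8}$ in the balanced regime $W \asymp X \asymp Y \asymp Z$. The key step is therefore to improve the second-moment estimate. Expanding,
$$
\sum_{z \in \cZ} |T_z|^2 = \sum_{w,w',x,x',y,y'} \alpha_w \bar\alpha_{w'} \beta_x \bar\beta_{x'} \gamma_y \bar\gamma_{y'}\, U_\cZ(wxy - w'x'y'),
$$
where $U_\cZ(t) = \sum_{z \in \cZ} \ep(zt)$, I would split this expression into a diagonal contribution from the equation $wxy = w'x'y'$ and an off-diagonal contribution.

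The diagonal piece is controlled by the weighted three-fold multiplicative energy of $(\cW, \cX, \cY)$, to be bounded by the AYMRS estimate, which is effective precisely in the range $W, X, Y \le p^{2/3}$ assumed in the hypothesis. The off-diagonal piece I would recast, after combining paired variables such as $(w,w')$ or $(y,y')$ into a single product variable carrying an appropriate (no longer bounded) weight, as a trilinear exponential sum to which Theorem~\ref{thm:Bound S3} applies in combination with a dyadic decomposition of the new weight. Balancing the two contributions should produce the refined second-moment bound needed to yield, after substitution and a square root, the claimed estimate $|S| \ll p^{1/8} W^{7/8} X^{7/8} Y^{15/16} Z^{15/16}$.

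The main obstacle is precisely the second-moment analysis: the AYMRS energy bound for the diagonal and the trilinear bound for the off-diagonal must both be deployed with exactly the right exponents to close the $p^{1/8}$ gap left by the pointwise approach, and the dyadic bookkeeping for the non-uniform weight arising from grouping $(w,w')$ or $(y,y')$ needs to be carried out without log losses. The hypothesis $W \le p^{2/3}$ is essential for the method, since outside this range the Rudnev point--plane incidence theorem underpinning AYMRS degrades and the claimed exponents cannot be sustained.
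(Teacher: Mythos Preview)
Your proposal takes a genuinely different route from the paper, but the second-moment step has a real gap.

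After Cauchy--Schwarz in $z$ you expand $\sum_{z}|T_z|^2$ and split into the diagonal $wxy=w'x'y'$ and the off-diagonal. The diagonal is the \emph{three-fold multiplicative energy}
\[
E_3=\#\{(w,x,y,w',x',y')\in\cW^2\times\cX^2\times\cY^2:\ wxy=w'x'y'\},
\]
and this is \emph{not} the quantity $N(\cU,\cV,\cW)$ bounded by Aksoy Yazici--Murphy--Rudnev--Shkredov. The AYMRS input in this paper (Lemma~\ref{lem:Collisions}, Corollary~\ref{cor:N Fp}) controls counts of the shape $u_1(v_1-w_1)=u_2(v_2-w_2)$, with a difference built in; there is no corresponding bound here for pure product collisions $wxy=w'x'y'$, and I do not see how to extract one from the available tools. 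For the off-diagonal, the phase is $z(wxy-w'x'y')$, and combining $(w,w')$ or $(y,y')$ into a single weighted variable does \emph{not} produce a sum of the type $\sum \ep(abc)$ to which Theorem~\ref{thm:Bound S3} applies: the expression $wxy-w'x'y'$ does not factor, and after any such grouping you are left with a genuinely bilinear (not trilinear product) structure with heavy weights, for which Theorem~\ref{thm:Bound S3} gives nothing beyond~\eqref{eq:bilin}. So neither half of your split is controlled by the lemmas actually at hand, and the $p^{1/8}$ gap you correctly identified remains open.

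By contrast, the paper does \emph{not} pass through Theorem~\ref{thm:Bound S3}. It applies Cauchy--Schwarz twice, first isolating $x$ and then $w$, to reach
\[
|S|^4 \ll W^2X^2Y^3Z^3 \sum_{w_1,w_2,x_1,x_2,y,z}\ep\bigl(yz(w_1-w_2)(x_1-x_2)\bigr).
\]
The point is that the phase now \emph{factors} as $\lambda\mu$ with $\lambda=y(w_1-w_2)$ and $\mu=z(x_1-x_2)$, so the sum is bilinear in $(\lambda,\mu)$ with weights $I(\lambda),J(\mu)$ whose $\ell^2$-norms are exactly $N(\cY,\cW,\cW)$ and $N(\cZ,\cX,\cX)$. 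Lemma~\ref{lem:Collisions} (valid because $W\le p^{2/3}$) gives $\sum I(\lambda)^2\ll W^3Y^{3/2}$ and $\sum J(\mu)^2\ll X^3Z^{3/2}$, and then~\eqref{eq:bilin} closes the argument. The key structural idea you are missing is to perform Cauchy--Schwarz so as to create \emph{differences} $w_1-w_2$ and $x_1-x_2$ inside the phase; those differences are precisely what makes the AYMRS bound applicable.
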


If $W = X=Y=Z$, the bound in Theorem~\ref{thm:Bound S4} becomes
$$
S(\cW, \cX, \cY, \cZ;\alpha, \beta, \gamma, \delta)  \ll    
p^{1/8}  W^{29/8} ,
$$
which is nontrivial for $W \ge p^{1/3}$. Once again, the range of non-triviality is 
inferior to that obtained by Bourgain~\cite{Bou1}.

Next we move to the case of sums~\eqref{eq:Sum T} with more complicated weights.

\begin{theorem}
\label{thm:Bound T3} 
For any sets $\cX, \cY, \cZ \subseteq \F_p^*$ of cardinalities $X, Y, Z$, respectively, with 
$X \ge Y \ge Z$  
and weights 
$\rho= (\rho_{x,y})$,  $\sigma = (\sigma_{x,z})$ and $\tau=(\tau_{y,z})$
with 
$$
\max_{(x,y) \in \cX \times \cY} |\rho_{x,y}| \le 1, \quad 
\max_{(x,z) \in \cX \times \cZ} |\sigma_{x,z}| \le 1, \quad 
\max_{(y,z) \in \cY \times \cZ} |\tau_{y,z}| \le 1,  
$$
we have
$$
T(\cX, \cY, \cZ ;  \rho, \sigma, \tau) \ll  p^{1/8} X^{7/8} Y^{29/32}  Z^{29/32}  .
$$
\end{theorem}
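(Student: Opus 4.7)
The strategy is to iterate Cauchy--Schwarz twice and reduce the estimation of $T$ to a six-variable counting problem controlled by the AYMRS point--plane incidence bound, using the tools developed in Section~\ref{sec:ExpEnergy}. Since the three weights $\rho,\sigma,\tau$ are each coupled across two variables, a single Cauchy--Schwarz does not linearise the expression, and a second application will be necessary.

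First, I would apply Cauchy--Schwarz in the pair $(y,z)$, using $|\tau_{y,z}|\le 1$ and hence $\sum_{y,z}|\tau_{y,z}|^{2}\le YZ$, to obtain
$$
|T|^{2}\;\le\; YZ \sum_{y\in\cY,\,z\in\cZ}\Bigl|\sum_{x\in\cX}\rho_{x,y}\sigma_{x,z}\ep(xyz)\Bigr|^{2}.
$$
Expanding the inner square and swapping the order of summation yields a quadrilinear expression over $(x_{1},x_{2},y,z)\in\cX^{2}\times\cY\times\cZ$ with phase $\ep((x_{1}-x_{2})yz)$ and separable-in-$(y,z)$ weights $\alpha_{x_{1},x_{2}}(y)=\rho_{x_{1},y}\overline{\rho_{x_{2},y}}$, $\beta_{x_{1},x_{2}}(z)=\sigma_{x_{1},z}\overline{\sigma_{x_{2},z}}$, both of modulus at most $1$. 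The diagonal $x_{1}=x_{2}$ contributes $\ll XY^{2}Z^{2}$ to $|T|^{2}$, which is harmless against the target bound.

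Second, I would apply Cauchy--Schwarz once more to the off-diagonal part, this time doubling either the $y$- or the $z$-variable; after expansion, one arrives at a six-variable sum over $(x_{1},x_{2},y_{1},y_{2},z_{1},z_{2})$ of a unit-modulus weight times the phase $\ep\bigl((x_{1}-x_{2})(y_{1}z_{1}-y_{2}z_{2})\bigr)$. This six-fold sum is precisely the type of quantity controlled by the second AYMRS estimate cited in the introduction: after substituting $w=x_{1}-x_{2}$ and $v=y_{1}z_{1}-y_{2}z_{2}$ and accounting for the representation multiplicities $r_{\cX-\cX}(w)$ and $r_{\cY\cZ-\cY\cZ}(v)$, the sum takes a shape very close to the one bounded in the proof of Theorem~\ref{thm:Bound S4}. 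Invoking this AYMRS-based estimate, then taking fourth roots and using $X\ge Y\ge Z$, yields the claimed
$$
|T|\;\ll\; p^{1/8}X^{7/8}Y^{29/32}Z^{29/32}.
$$

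The principal obstacle is the final estimate of the six-variable sum in the second step: the nested difference structure of the phase and the coupling of the weights through both $x_{1}$ and $x_{2}$ prevent a direct appeal to Theorem~\ref{thm:Bound S4}, which requires single-variable weights, and a naive use of the bilinear bound~\eqref{eq:bilin} would collapse the estimate to the weaker $p^{1/4}XY^{3/4}Z^{3/4}$. The correct input is the underlying AYMRS/Petridis bound on additive energies involving products of sets in $\F_{p}$, and it must be applied after a careful rearrangement that identifies the representation functions $r_{\cX-\cX}$, $r_{\cY\cZ-\cY\cZ}$ and book-keeps them correctly; their contribution is precisely what produces the somewhat unusual exponent $29/32$ in the final bound.
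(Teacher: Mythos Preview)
Your strategy has a real gap. The order in which you apply Cauchy--Schwarz matters here, and your choice leaves you stuck. After your first step (CS in $(y,z)$), the weights $\rho_{x_1,y}\overline{\rho_{x_2,y}}$ and $\sigma_{x_1,z}\overline{\sigma_{x_2,z}}$ are coupled to \emph{both} $x_1$ and $x_2$; whichever second CS you perform---doubling $y$, doubling $z$, or doubling $(y,z)$---some weight still depends on $(x_1,x_2)$. You cannot therefore collapse the $x$-sum to a representation function $r_{\cX-\cX}(w)$ as you claim, and the appeal to ``the underlying AYMRS/Petridis bound'' is not a proof but a hope. Note also that your description is internally inconsistent: ``doubling either the $y$- or the $z$-variable'' yields phase $(x_1-x_2)y(z_1-z_2)$ or $(x_1-x_2)(y_1-y_2)z$, not the $(x_1-x_2)(y_1z_1-y_2z_2)$ you write down; and the energy quantity attached to the latter, namely $\sum_v r_{\cY\cZ-\cY\cZ}(v)^2$, is the additive energy of a product multiset, which is \emph{not} what the paper bounds and for which you give no estimate yielding the exponent $29/32$.

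The paper does the Cauchy--Schwarz in the opposite order, via Lemma~\ref{lem:ExpSum} with $n=3$: it strips \emph{all} weights and keeps the $x$-sum intact, arriving at
\[
|T|^4 \le X^3Y^2Z^2 \sum_{y_1,y_2\in\cY}\sum_{z_1,z_2\in\cZ}\Bigl|\sum_{x\in\cX}\ep\bigl(x(y_1-y_2)(z_1-z_2)\bigr)\Bigr|.
\]
The phase is $x\lambda$ with $\lambda=(y_1-y_2)(z_1-z_2)$: a clean inner character sum in a single variable. One more CS together with orthogonality $\sum_\lambda|\sum_x\ep(\lambda x)|^2=pX$ gives $|T|^8\ll pX^7Y^4Z^4K$, where $K=\sum_{\lambda\ne0}J(\lambda)^2$ and $J(\lambda)$ counts $(y_1,y_2,z_1,z_2)$ with $(y_1-y_2)(z_1-z_2)=\lambda$. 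The crucial step is then $K^2\le D_\times(\cY)D_\times(\cZ)$ (via multiplicative characters and Cauchy), and Corollary~\ref{cor:DU} gives $D_\times(\cU)\ll U^{13/2}$ for $U\le p^{2/3}$, whence $K\ll Y^{13/4}Z^{13/4}$ and the exponent $29/32=\tfrac12\cdot\tfrac14(4+\tfrac{13}{4})$ drops out. The missing idea in your proposal is precisely this: keep the largest set $\cX$ as an untouched linear phase so that orthogonality is exact, and control the $(\cY,\cZ)$ part through $D_\times$ rather than through an additive energy of a product set.
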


For $X = Y = Z$,  Theorem~\ref{thm:Bound T3} is nontrivial in the same range 
 $X \ge  p^{2/5}$ like Theorem~\ref{thm:Bound S3}.

We also present an explicit bound for multilinear sums with four sets. 
Again, we make a simplifying  assumption that all cardinalities are less than $p^{2/3}$.

\begin{theorem}
\label{thm:Bound T4} 
For any sets $\cW, \cX, \cY, \cZ \subseteq \F_p^*$ of cardinalities $W, X, Y, Z$, respectively, with 
$$
p^{2/3} \ge W \ge X \ge Y \ge Z , 
$$
and weights $\vartheta = (\vartheta_{w,x,y})$, 
$\rho= (\rho_{w,x,z})$,  $\sigma = (\sigma_{w,y,z})$ and $\tau=(\tau_{x,y,z})$
with 
\begin{align*}
\max_{(w,x,y) \in \cW\times  \cX \times \cY} |\vartheta_{w,x,y}| \le 1, &\quad 
\max_{(w,x,z) \in \cW\times \cX \times \cZ} |\rho_{w,x,z}| \le 1, \\
\max_{(w,y,z) \in \cW\times  \cY \times \cZ} |\sigma_{w,y,z}| \le 1, &\quad 
\max_{(x,y,z) \in  \cX\times \cY \times \cZ} |\tau_{x,y,z}| \le 1,  
\end{align*}
we have
$$
T(\cW, \cX, \cY, \cZ ; \vartheta,  \rho, \sigma, \tau)   \ll   
 p^{1/16}  W^{15/16} (XY)^{61/64}   Z^{31/32}.
 $$
\end{theorem}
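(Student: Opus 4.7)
The plan is to follow the pattern of the proof of Theorem~\ref{thm:Bound T3} with one additional Cauchy--Schwarz step to accommodate the fourth set $\cW$. Apply Cauchy--Schwarz to the outer sum over $w \in \cW$ (the largest set) to obtain
$$
|T(\cW, \cX, \cY, \cZ; \vartheta, \rho, \sigma, \tau)|^2 \le W \sum_{w \in \cW} |T_w|^2,
$$
where
$$
T_w = \sum_{x \in \cX}\sum_{y \in \cY}\sum_{z \in \cZ} \vartheta_{w,x,y}\, \rho_{w,x,z}\, \sigma_{w,y,z}\, \tau_{x,y,z}\, \ep(wxyz).
$$
For each fixed $w$, $T_w$ has the shape of the trilinear sum handled by Theorem~\ref{thm:Bound T3} (with phase $\ep((w)\cdot xyz)$, a harmless multiplicative rescaling) and three 2-variable weights $\vartheta_{w,\cdot,\cdot},\rho_{w,\cdot,\cdot},\sigma_{w,\cdot,\cdot}$, except that it carries the extra three-variable weight $\tau_{x,y,z}$.

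The crucial step is handling $\tau$. I would expand the square $|T_w|^2 = T_w \overline{T_w}$, producing the product $\tau_{x_1,y_1,z_1}\overline{\tau_{x_2,y_2,z_2}}$ (of modulus at most one) together with the phase $\ep(w(x_1 y_1 z_1 - x_2 y_2 z_2))$. Summing over $w\in\cW$ and interchanging the order of summation yields a sum over the seven variables $(w, x_1, y_1, z_1, x_2, y_2, z_2)$ with a product of 2-variable weights and a linear-in-$w$ oscillation controlled by the product-difference $x_1y_1z_1 - x_2y_2z_2$. This is precisely the sort of expression that the tools of Section~\ref{sec:ExpEnergy} are developed to handle in full generality; combined with the bound of Aksoy Yazici, Murphy, Rudnev and Shkredov~\cite{AYMRS} on the number of solutions of product--difference equations (valid under the hypothesis that all set sizes are at most $p^{2/3}$), this should reduce the problem to an energy estimate. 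Taking square roots and combining with the $W$ factor from the first Cauchy--Schwarz then delivers the target bound. The exponent identities
$$
\tfrac{1 + 7/8}{2} = \tfrac{15}{16}, \qquad \tfrac{1 + 29/32}{2} = \tfrac{61}{64}, \qquad \tfrac{1 + 15/16}{2} = \tfrac{31}{32},
$$
relating the exponents of Theorem~\ref{thm:Bound T3} to those of Theorem~\ref{thm:Bound T4}, are consistent with a single extra Cauchy--Schwarz step on $w$, with the smallest variable $Z$ participating in a further averaging (reflecting the third identity above).

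The main obstacle is the three-variable weight $\tau_{x,y,z}$, which prevents a direct appeal to Theorem~\ref{thm:Bound T3}; this is precisely the reason the authors develop Section~\ref{sec:ExpEnergy} in full generality rather than quoting the $n=3$ statement. A secondary obstacle is the careful bookkeeping of exponents to land on the precise profile $p^{1/16} W^{15/16} (XY)^{61/64} Z^{31/32}$, and in particular the slightly different exponent on $Z$, which appears to come from treating $Z$ asymmetrically inside the Section~\ref{sec:ExpEnergy} machinery (very likely as the set grouped with the difference variable in the product--difference count). The assumption $W \le p^{2/3}$ then enters exactly at the point where the AYMRS incidence bound is invoked, via its underlying dependence on Rudnev's point--plane theorem~\cite{Rud}.
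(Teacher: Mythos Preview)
There is a genuine gap in the plan. After your single Cauchy--Schwarz on $w$ and the expansion of $|T_w|^2$, the phase you obtain is $\ep\bigl(w(x_1y_1z_1 - x_2y_2z_2)\bigr)$. This is \emph{not} a product of differences, and neither Lemma~\ref{lem:ExpSum} nor the combinatorial estimates of Section~\ref{sec:AddComb} (Lemma~\ref{lem:Collisions}, Corollary~\ref{cor:DU}) say anything about counting solutions of $x_1y_1z_1 = x_2y_2z_2$ with variables in arbitrary sets; the AYMRS machinery deals with equations of the shape $u_1(v_1-w_1)=u_2(v_2-w_2)$ or with $D_\times$ and $T$, not with product--difference expressions of the form $x_1y_1z_1 - x_2y_2z_2$. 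Moreover, the $w$-dependent weights $\vartheta_{w,\cdot,\cdot},\rho_{w,\cdot,\cdot},\sigma_{w,\cdot,\cdot}$ are still present inside the $w$-sum after your expansion, so you cannot even isolate a clean sum $\sum_{w}\ep(w\lambda)$. Your exponent heuristic is also a warning sign: the identity $(1+15/16)/2=31/32$ has no source in Theorem~\ref{thm:Bound T3}, whose $Z$-exponent is $29/32$, not $15/16$.

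What the paper actually does is apply Lemma~\ref{lem:ExpSum} directly with $n=4$ (after permuting so that the smallest set $\cZ$ plays the role of $\cX_1$). Three Cauchy--Schwarz steps inside the lemma strip all four weights, including $\tau$, and produce the phase $\ep\bigl(z(w_1-w_2)(x_1-x_2)(y_1-y_2)\bigr)$ --- now a genuine product of one linear factor and three differences. The decisive step is then to group this as $\lambda\mu$ with $\lambda=z(w_1-w_2)$ and $\mu=(x_1-x_2)(y_1-y_2)$, apply the bilinear bound~\eqref{eq:bilin} in $(\lambda,\mu)$, and estimate the two second moments by \emph{different} tools: $\sum_\lambda I(\lambda)^2 = N(\cZ,\cW,\cW)\ll W^3 Z^{3/2}$ via Lemma~\ref{lem:Collisions} (this is where $W\le p^{2/3}$ enters), and $\sum_\mu J(\mu)^2 \ll \bigl(D_\times(\cX)D_\times(\cY)\bigr)^{1/2}\ll X^{13/4}Y^{13/4}$ via Corollary~\ref{cor:DU}. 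This asymmetric grouping of $\{W,Z\}$ versus $\{X,Y\}$ is exactly what produces the exponents $15/16$, $61/64$, $61/64$, $31/32$.
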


For $W =X  = Y =Z$,   the bound of Theorem~\ref{thm:Bound T4} becomes 
$$
T(\cW, \cX, \cY, \cZ ; \vartheta,  \rho, \sigma, \tau)   \ll   
 p^{1/16}  W^{61/16},
  $$
which is nontrivial for $W \ge  p^{1/3}$ which is the same range as for 
Theorem~\ref{thm:Bound T4}  in the case of sets of equal cardinalities.

Note that although the bound of  Theorems~\ref{thm:Bound T3} and~\ref{thm:Bound T4} are 
weaker than that of Theorems~\ref{thm:Bound S3} and~\ref{thm:Bound S4}, they however 
apply to more general sums, including, for example, to sums of the form 
\begin{equation}
\label{eq:cube quart}
\sum_{x \in\cX} \sum_{y \in \cY}  \sum_{z\in \cZ}\ \ep(F(x,y,z)), \quad 
\sum_{w \in\cW} \sum_{x \in\cX} \sum_{y \in \cY}  \sum_{z\in \cZ}\ \ep(G(w,x,y,z)), 
\end{equation}
for any cubic  polynomial $F(x,y,z) \in \F_p[x,y,z]$ and 
quartic  polynomial 
%%PROOFS  
% $G(w,x,y,z) \in \F_p[x,y,z]$  
    $G(w,x,y,z) \in \F_p[w,x,y,z]$  
that contain a term of the form $axyz$ and $awxyz$, 
respectively, with $a \ne 0$.

\subsection{Applications} 
\label{sec:appl}
Given two set $\cA,\cB \subseteq \F_q$ we define the sum,  difference and product sets
\begin{equation}
\label{eq:sumdiffprod}
\begin{split}
& \cA + \cB = \{a+b~:~a\in \cA, \ b \in \cB\}, \\ & \cA  - \cB   = \{a-b~:~a\in \cA, \ b \in \cB\},\\ 
& \cA \cB= \{ab~:~a\in \cA, \ b \in \cB\}.
\end{split}
\end{equation}
 These notations naturally extend to operations with any number of sets.
 
 First we recall that by a result~S{\'a}rk{\"o}zy~\cite{Sark}, for any sets $\cA,\cB,\cC  \subseteq \F_q^*$ 
 of cardinalities $A,B,C$, we have
\begin{equation}
\label{eq:ABC Fq1}
\#\(\cA\cB + \cC\) = q + O\(\frac{q^3}{ABC}\), 
 \end{equation}
see also~\cite[Equation~(9)]{Shp}. This immediately implies that there is an absolute constant $c_0$ 
such that  for any sets $\cA,\cB,\cC, \cD  \subseteq \F_q^*$ 
of cardinalities $A,B,C, D$  with 
$
ABCD \ge c_0q^3$, 
we have 
\begin{equation}
\label{eq:ABCD Fq}
 \cA\cB + \cC + \cD  = \F_q.
 \end{equation}
 Indeed, if there is $\lambda \in  \F_q \setminus(\cA\cB + \cC + \cD)$ then 
 $(\cA\cB + \cC) \cap(\lambda - \cD)  = \emptyset$ and thus 
 $$
D = \#(\lambda - \cD)< q - \#(\cA\cB + \cC)  =  O\(\frac{q^3}{ABC}\)
$$
by~\eqref{eq:ABC Fq1}.
Furthermore,  over a prime field $\F_p$,  Roche-Newton, Rudnev and  Shkredov~\cite[Theorem~1]{RNRS} give a 
lower bound 
\begin{equation}
\label{eq:ABC Fq2}
\#\(\cA\cB + \cC\)  \gg \min\{p, (ABC)^{1/2}, ABC/M\}
 \end{equation}
 where $M = \max\{A,B,C\}$.

We now consider the related question, involving triple products and four sets.

\begin{theorem}
\label{thm:Image1} 
For any sets $\cA,\cB,\cC, \cD \subseteq \F_p^*$ of cardinalities $A,B,C,D$, respectively, with 
$$
 A \ge B \ge C  , 
$$ 
we have
$$
\#\(\cA\cB \cC +\cD\) = p + O\( p^{5/2} A^{-1/2} B^{-1/2}  C^{-1/4}  D^{-1}\)
 $$
 and 
$$
\#\(\cA\cB \cC +\cD\)  \gg  \min\{p , p^{-1/2} A^{1/2} B^{1/2}  C^{1/4} D\}. 
 $$
\end{theorem}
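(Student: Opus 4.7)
The plan is to follow the Fourier-analytic scheme of Garaev~\cite{Gar1}, which converts trilinear exponential sum bounds into estimates for multiplicative--additive sumsets, using Theorem~\ref{thm:Bound S3} as the key input. Writing
$$
J(\lambda) = \#\{(a,b,c,d) \in \cA \times \cB \times \cC \times \cD : abc + d = \lambda\},
$$
we have that $N := \#(\cA\cB\cC + \cD)$ equals the number of $\lambda \in \F_p$ with $J(\lambda) > 0$, and that $\sum_\lambda J(\lambda) = ABCD$.

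First I would compute the second moment of $J$ via Parseval,
$$
\sum_{\lambda \in \F_p} J(\lambda)^2 = \frac{1}{p} \sum_{t \in \F_p} |U(t)|^2 |V(t)|^2,
$$
where $U(t) = \sum_{a,b,c} \ep(tabc)$ and $V(t) = \sum_{d} \ep(td)$. The $t = 0$ term contributes the expected main term $(ABCD)^2/p$. For $t \ne 0$, I would apply Theorem~\ref{thm:Bound S3} to $U(t)$ with trivial weights, after absorbing $t$ into the third variable (the set $t\cC \subseteq \F_p^*$ has cardinality $C$, so the hypothesis $A \ge B \ge C$ is preserved), obtaining
$$
\max_{t \ne 0} |U(t)| \ll p^{1/4} A^{3/4} B^{3/4} C^{7/8}.
$$
Combined with $\sum_t |V(t)|^2 = pD$ from Parseval for $V$, this yields
$$
\sum_{\lambda \in \F_p} J(\lambda)^2 = \frac{(ABCD)^2}{p} + O\(p^{1/2} A^{3/2} B^{3/2} C^{7/4} D\).
$$

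From here both bounds follow by standard manipulations. For the first estimate, the variance identity
$$
\sum_\lambda \(J(\lambda) - ABCD/p\)^2 = \sum_\lambda J(\lambda)^2 - (ABCD)^2/p
$$
combined with the observation that every $\lambda \notin \cA\cB\cC + \cD$ contributes at least $(ABCD/p)^2$ to the left-hand side gives $p - N \ll p^{5/2} A^{-1/2} B^{-1/2} C^{-1/4} D^{-1}$, which is the claimed asymptotic. For the second estimate, Cauchy--Schwarz in the form
$$
(ABCD)^2 = \Bigl(\sum_\lambda J(\lambda)\Bigr)^2 \le N \sum_\lambda J(\lambda)^2
$$
produces $N \gg \min\{p, p^{-1/2} A^{1/2} B^{1/2} C^{1/4} D\}$, according to which of the two terms in the second-moment estimate is dominant.

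There is no serious obstacle beyond correctly invoking Theorem~\ref{thm:Bound S3}; the rest is a routine second-moment computation parallel to the passage from~\eqref{eq:ABC Fq1} to~\eqref{eq:ABCD Fq}. The only point requiring some care is the rescaling $c \mapsto tc$ for $t \ne 0$: it preserves cardinalities and membership in $\F_p^*$, and therefore allows Theorem~\ref{thm:Bound S3} to apply uniformly in $t$ under the stated ordering of $A, B, C$.
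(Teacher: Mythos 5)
Your proposal is correct and follows essentially the same route as the paper: both estimates rest on the single input $\max_{t\ne 0}\bigl|\sum_{a,b,c}\ep(tabc)\bigr|\ll p^{1/4}A^{3/4}B^{3/4}C^{7/8}$ from Theorem~\ref{thm:Bound S3} (with $t$ absorbed into $\cC$, exactly as you note) combined with Parseval, and your Cauchy--Schwarz derivation of the lower bound coincides with the paper's. The only difference is organizational: for the asymptotic formula the paper counts solutions of $abc+d=e$ with $e$ ranging over the complement $\cE=\F_p\setminus(\cA\cB\cC+\cD)$ and bounds that first moment via Cauchy--Schwarz, whereas you bound the variance of $J(\lambda)$; squaring the paper's inequality shows the two computations are equivalent, and both yield $p-N\ll p^{5/2}A^{-1/2}B^{-1/2}C^{-1/4}D^{-1}$.
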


In particular, we immediately derive a version of the property~\eqref{eq:ABCD Fq} for five sets
in $\F_p$.

\begin{cor}
\label{cor:ABCDE Fp1}  There is an absolute constant $c_0$ 
such that  for any sets $\cA,\cB,\cC, \cD, \cE  \subseteq \F_p^*$ 
of cardinalities $A,B,C, D, E$  with 
$$
A  B   C^{1/2}  D^2 E^2 \ge c_0 p^5,
$$ 
we have 
$$
 \cA\cB \cC + \cD +\cE  = \F_p. 
 $$
\end{cor}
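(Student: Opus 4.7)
The plan is to mirror the short disjointness argument that the paper already used to derive the five-set equation \eqref{eq:ABCD Fq} from the S\'ark\"ozy-type bound \eqref{eq:ABC Fq1}, but now invoking the first (asymptotic) bound of Theorem~\ref{thm:Image1} in place of \eqref{eq:ABC Fq1}. The key input is thus
\[
\#(\cA\cB\cC+\cD)=p+O\!\left(p^{5/2}A^{-1/2}B^{-1/2}C^{-1/4}D^{-1}\right),
\]
valid under the hypothesis $A\ge B\ge C$.

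I would argue by contradiction. Suppose there exists $\lambda\in\F_p\setminus(\cA\cB\cC+\cD+\cE)$. Then the two subsets $\cA\cB\cC+\cD$ and $\lambda-\cE$ of $\F_p$ are disjoint, so
\[
\#(\cA\cB\cC+\cD)+\#(\lambda-\cE)\le p.
\]
Since $\#(\lambda-\cE)=E$, substituting the bound from Theorem~\ref{thm:Image1} gives
\[
E\le p-\#(\cA\cB\cC+\cD)\ll p^{5/2}A^{-1/2}B^{-1/2}C^{-1/4}D^{-1},
\]
which, after multiplying both sides by $A^{1/2}B^{1/2}C^{1/4}D$ and squaring, rearranges to $ABC^{1/2}D^2E^2\ll p^5$. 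Taking $c_0$ strictly larger than the absolute implied constant contradicts the hypothesis, so no such $\lambda$ exists and $\cA\cB\cC+\cD+\cE=\F_p$.

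A small bookkeeping point is that Theorem~\ref{thm:Image1} is stated under the ordering $A\ge B\ge C$; since the statement of Corollary~\ref{cor:ABCDE Fp1} is symmetric in $\cA,\cB,\cC$ in the quantity $ABC^{1/2}$ only through the exponent on $C$, I would (without loss of generality) relabel so that $\cC$ is the smallest among $\cA,\cB,\cC$, which keeps the hypothesis $ABC^{1/2}D^2E^2\ge c_0p^5$ satisfied and places Theorem~\ref{thm:Image1} in its stated regime. There is no real obstacle here: the entire argument is a one-line pigeonhole reduction, and the only thing to watch is consistently tracking the exponents through the rearrangement so that the final inequality matches exactly the form $ABC^{1/2}D^2E^2\ge c_0p^5$ announced in the corollary.
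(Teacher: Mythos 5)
Your argument is exactly the one the paper intends: the corollary is derived from the asymptotic formula of Theorem~\ref{thm:Image1} by the same disjointness/pigeonhole step the paper spells out when deducing \eqref{eq:ABCD Fq} from \eqref{eq:ABC Fq1}, and your exponent bookkeeping (including the observation that relabelling so that $\cC$ is smallest only increases $ABC^{1/2}$) is correct. Nothing to add.
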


Furthermore, we also define 
$$
\cA^k = \{a^k~:~a \in \cA\}
$$
(note that $\cA^k$ is not the $k$-fold product set of $\cA$ which sometimes is 
also denoted by $\cA^k$).

Aksoy Yazici, Murphy, Rudnev and 
Shkredov~\cite[Corollary~2.13(1)]{AYMRS} 
%%PROOFS - numbering of statement in printed version
have shown that 
for any set $\cA \subseteq \F_p$ of cardinality $A < p^{7/12}$ we have 
\begin{equation}
\label{eq:2diff3}
\#\( (\cA-\cA)^3 + (\cA-\cA)^3 \) \gg A^{36/35}. 
\end{equation}

 Here we obtain a related result  involving four sets from $\F_p$.

\begin{theorem}
\label{thm:Image2} 
For any sets $\cA,\cB,\cC, \cD \subseteq \F_p^*$ of cardinalities $A,B,C,D$, respectively, with 
$$
p^{2/3} \ge A \ge B \ge C ,  
$$
we have
$$
\#\((\cA+\cB+\cC)^3 + \cD\) = p + O\( p^{9/4} A^{-1/4} B^{-3/16}  C^{-3/16} D^{-1} \)
$$
and
$$
\#\((\cA+\cB+\cC)^3 + \cD\)  \gg  \min\{p , p^{-1/4} A^{1/4} B^{3/16}  C^{3/16} D\}.
$$
\end{theorem}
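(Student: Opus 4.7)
The plan is to apply Garaev's approach: reduce the image-size question to a bound on the trilinear exponential sum
\begin{equation*}
V(t) = \sum_{a \in \cA} \sum_{b \in \cB} \sum_{c \in \cC} \ep(t(a+b+c)^3)
\end{equation*}
for $t \ne 0$, obtained via Theorem~\ref{thm:Bound T3}. Let $J(\lambda)$ count representations $\lambda = (a+b+c)^3 + d$ with $(a,b,c,d) \in \cA \times \cB \times \cC \times \cD$, and put $N = \#\((\cA+\cB+\cC)^3 + \cD\)$, so that $J$ is supported on a set of size $N$ and $\sum_\lambda J(\lambda) = ABCD$. Orthogonality yields the Fourier expansion
\begin{equation*}
J(\lambda) = \frac{ABCD}{p} + \frac{1}{p}\sum_{t \ne 0}\ep(-t\lambda)\, U(t)\, V(t), \qquad U(t) = \sum_{d \in \cD} \ep(td),
\end{equation*}
and Parseval combined with the identity $\sum_t |U(t)|^2 = pD$ gives
\begin{equation*}
\sum_\lambda |J(\lambda) - ABCD/p|^2 \;=\; \frac{1}{p}\sum_{t \ne 0}|U(t)|^2|V(t)|^2 \;\le\; D \max_{t \ne 0}|V(t)|^2.
\end{equation*}

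The heart of the argument is the estimate for $V(t)$. Expanding
\begin{equation*}
(a+b+c)^3 = a^3 + b^3 + c^3 + 3(a^2b + ab^2 + a^2c + ac^2 + b^2c + bc^2) + 6abc,
\end{equation*}
I factor $\ep(t(a+b+c)^3) = \rho_{a,b}\,\sigma_{a,c}\,\tau_{b,c}\,\ep(6tabc)$, where $\rho, \sigma, \tau$ absorb the pure cubes and the mixed quadratics (for instance, $\rho_{a,b} = \ep(t(a^3 + 3a^2b + 3ab^2))$, $\sigma_{a,c} = \ep(3t(a^2c + ac^2))$, $\tau_{b,c} = \ep(t(b^3 + c^3 + 3b^2c + 3bc^2))$); each pair-weight has modulus $1$. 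Substituting $x = 6ta \in 6t\cA$, $y = b$, $z = c$ (valid for $p > 3$; smaller primes are handled trivially), $V(t)$ assumes the exact form
\begin{equation*}
V(t) = \sum_{x \in 6t\cA}\sum_{y \in \cB}\sum_{z \in \cC} \rho_{x,y}\sigma_{x,z}\tau_{y,z}\, \ep(xyz)
\end{equation*}
covered by Theorem~\ref{thm:Bound T3}, which (using $A \ge B \ge C$) yields $|V(t)| \ll p^{1/8} A^{7/8} B^{29/32} C^{29/32}$.

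The two stated estimates now follow by standard arguments. For the error bound, any $\lambda \notin (\cA+\cB+\cC)^3 + \cD$ has $J(\lambda)=0$ and so contributes $(ABCD/p)^2$ to the Parseval sum; there are $p - N$ such $\lambda$, hence
\begin{equation*}
p - N \;\ll\; \frac{p^2}{(ABCD)^2}\cdot p^{1/4} A^{7/4} B^{29/16} C^{29/16} D \;=\; p^{9/4} A^{-1/4} B^{-3/16} C^{-3/16} D^{-1}.
\end{equation*}
For the lower bound on $N$, Cauchy--Schwarz gives $(ABCD)^2 \le N \sum_\lambda J(\lambda)^2$; combined with the identity $\sum_\lambda J(\lambda)^2 = (ABCD)^2/p + \sum_\lambda |J(\lambda)-ABCD/p|^2$ this yields $N \gg \min\{p,\, p^{-1/4} A^{1/4} B^{3/16} C^{3/16} D\}$.

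The main obstacle is the combinatorial packaging of $V(t)$ into the exact trilinear shape required by Theorem~\ref{thm:Bound T3}. The key point is that, although $(a+b+c)^3$ contains ten monomials, only one of them ($6abc$) genuinely involves all three variables; the remaining nine depend on at most two of $a,b,c$ and can be absorbed into pair-weights of modulus one. The same strategy would not extend verbatim to $(a+b+c)^4$, where the terms $12a^2bc$, $12ab^2c$, $12abc^2$ already involve all three variables and would obstruct a clean factorization.
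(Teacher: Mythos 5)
Your proof is correct and follows essentially the same route as the paper: Garaev's method of comparing the first and second moments of the representation function $J(\lambda)$ against the main term $ABCD/p$, with the key input being Theorem~\ref{thm:Bound T3} applied to $\sum_{a,b,c}\ep(t(a+b+c)^3)$ after absorbing the nine monomials of $(a+b+c)^3$ that miss at least one variable into unimodular pair-weights. The only differences are cosmetic: you spell out the weight decomposition that the paper delegates to the remark around~\eqref{eq:cube quart}, and you run both the asymptotic formula and the lower bound through a single Parseval identity, whereas the paper obtains the first by counting solutions of $(a+b+c)^3+d-e=0$ with $e$ ranging over the complement and the second by the same second-moment computation you use.
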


In particular, if $A=B=C=D$ the lower bounds of  Theorems~\ref{thm:Image1} and~\ref{thm:Image2} are 
nontrivial for $A\ge p^{2/5}$.   We also obtain yet another analogue of~\eqref{eq:ABCD Fq}:

\begin{cor}
\label{cor:ABCDE Fp2}  There is an absolute constant $c_0$ 
such that  for any sets $\cA,\cB,\cC, \cD, \cE  \subseteq \F_p^*$ 
of cardinalities $A,B,C, D, E$  with 
$$
A B^{3/4}  C^{3/4} D^{4} E^4 \ge c_0 p^9,
$$ 
we have 
$$
(\cA+\cB+\cC)^3 + \cD +\cE  = \F_p. 
 $$
\end{cor}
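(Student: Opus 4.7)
The plan is to deduce Corollary~\ref{cor:ABCDE Fp2} from Theorem~\ref{thm:Image2} by the same disjointness pigeonhole that the authors use in the paragraph after~\eqref{eq:ABC Fq1} to upgrade S{\'a}rk{\"o}zy's estimate for $\#(\cA\cB+\cC)$ into the covering statement~\eqref{eq:ABCD Fq}.

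Since the set $(\cA+\cB+\cC)^3$ is symmetric in its three arguments, I would relabel so that $A\ge B\ge C$, which is the ordering required by Theorem~\ref{thm:Image2}. Suppose for a contradiction that there is some $\lambda\in\F_p$ with $\lambda\notin(\cA+\cB+\cC)^3+\cD+\cE$. Then the two sets $(\cA+\cB+\cC)^3+\cD$ and $\lambda-\cE$ are disjoint subsets of $\F_p$, so
$$
E\;=\;\#(\lambda-\cE)\;\le\;p-\#\bigl((\cA+\cB+\cC)^3+\cD\bigr).
$$
Feeding the deficiency bound of Theorem~\ref{thm:Image2} into the right-hand side yields
$$
E\;\ll\;p^{9/4}\,A^{-1/4}\,B^{-3/16}\,C^{-3/16}\,D^{-1},
$$
and raising to the fourth power and rearranging gives $AB^{3/4}C^{3/4}D^4E^4\ll p^9$. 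Choosing $c_0$ larger than the implicit constant contradicts the hypothesis, so no such $\lambda$ can exist.

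The only point at which there is anything to watch is the side condition $A\le p^{2/3}$ in Theorem~\ref{thm:Image2}. When the largest of $A,B,C$ exceeds this threshold, one passes to a subset $\cA'\subseteq\cA$ of size $\lfloor p^{2/3}\rfloor$ (and similarly for $\cB$ or $\cC$ if necessary); since $(\cA'+\cB+\cC)^3+\cD+\cE$ is contained in $(\cA+\cB+\cC)^3+\cD+\cE$, it suffices to verify the hypothesis for the truncated tuple after suitably enlarging $c_0$. This is routine bookkeeping rather than a genuine obstacle; the core of the proof is the three-line computation above, so I do not expect any real difficulty.
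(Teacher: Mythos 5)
Your three-line computation is exactly the paper's intended proof: the authors give no separate argument for Corollary~\ref{cor:ABCDE Fp2}, they simply invoke the same disjointness observation they spell out after~\eqref{eq:ABC Fq1} for~\eqref{eq:ABCD Fq}, and feeding the deficiency bound of Theorem~\ref{thm:Image2} into $E\le p-\#\bigl((\cA+\cB+\cC)^3+\cD\bigr)$ and raising to the fourth power is precisely what is wanted. The relabeling to enforce $A\ge B\ge C$ is also legitimate, since $(\cA+\cB+\cC)^3$ is symmetric in the three summands.

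The one place where your write-up overclaims is the "routine bookkeeping" for the side condition $A\le p^{2/3}$. Truncating $\cA$ to a subset $\cA'$ of size $\lfloor p^{2/3}\rfloor$ costs a factor $A'/A$, which can be as small as $p^{2/3-1}=p^{-1/3}$ when $A$ is close to $p$. Concretely, if $\cA=\F_p^*$ then the original hypothesis only guarantees $B^{3/4}C^{3/4}D^4E^4\gg p^8$, whereas applying Theorem~\ref{thm:Image2} to the truncated tuple would require $B^{3/4}C^{3/4}D^4E^4\gg p^{9-2/3}=p^{25/3}$; the missing factor $p^{1/3}$ cannot be absorbed by enlarging the absolute constant $c_0$. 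So either the corollary must be read with the hypothesis $p^{2/3}\ge A\ge B\ge C$ inherited from Theorem~\ref{thm:Image2} (the paper is silent on this, and its statement of the corollary indeed omits the constraint), or the regime $\max\{A,B,C\}>p^{2/3}$ needs a genuinely separate argument rather than truncation. Everything else in your proposal is correct and coincides with the paper's route.
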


Theorems~\ref{thm:Image1} and~\ref{thm:Image2} are based on bounds of exponential sums 
of Theorems~\ref{thm:Bound S3} and~\ref{thm:Bound T3}, respectively. Using Theorems~\ref{thm:Bound S4} and~\ref{thm:Bound T4}
one can obtain versions of Theorems~\ref{thm:Image1} and~\ref{thm:Image2}
 for more complicated sets such as $\cA\cB \cC\cD +\cE$ and 
$(\cA+\cB+\cC+\cD)^4 + \cE$.

Our final application is an extension of the following inequality of Garaev~\cite[Theorem~1]{Gar1} to triple products 
\[
\#(\cA\cA) \#(\cA+\cA) \gg \min\left\{p A , \frac{A^4}{p} \right\}.
\]

\begin{theorem}
\label{thm:UV} 
Let $\cA, \cB, \cC, \cD\subseteq \F_p^*$ be sets of cardinalities $A$, $B$, $C$ and $D$, respectively.
Consider the sets 
$$
\cU = \cA \cB\cC \mand \cV = \cA +\cD 
$$
of cardinalities $U$ and  $V$, respectively.
Then we have  
$$
UV \gg   p A\qquad \text{or}\qquad
U^3 V^2  \gg A^4 B  C^{1/2} D^{2} p^{-1}.
$$
\end{theorem}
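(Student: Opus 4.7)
My plan is to run a Cauchy--Schwarz argument that packages the statement into an instance of the trilinear exponential sum bound supplied by Theorem~\ref{thm:Bound S3}. The key device is the representation function
$$
N(u,v) = \#\{(a,b,c,d)\in\cA\times\cB\times\cC\times\cD\,:\,abc=u,\ a+d=v\},
$$
for which every quadruple $(a,b,c,d)$ contributes exactly once to $\sum_{(u,v)\in\cU\times\cV}N(u,v) = ABCD$. Cauchy--Schwarz then gives
$$
(ABCD)^2\le UV\cdot E,\qquad E = \sum_{u,v}N(u,v)^2,
$$
and $E$ counts the pairs of quadruples $(a_i,b_i,c_i,d_i)$, $i=1,2$, with $a_1b_1c_1 = a_2b_2c_2$ and $a_1+d_1 = a_2+d_2$.

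Next I open $E$ by Fourier inversion on $\F_p$:
$$
E = p^{-2}\sum_{\xi,\eta\in\F_p}|R(\xi,\eta)|^2,\qquad R(\xi,\eta) = \Big(\sum_{d\in\cD}\ep(\eta d)\Big)\,T(\xi,\eta),
$$
with $T(\xi,\eta) = \sum_{a\in\cA,\,b\in\cB,\,c\in\cC}\ep(\xi abc + \eta a)$. For $\xi=0$ the sum $T(0,\eta)$ factorises as a product of two discrete Fourier transforms, so Parseval's identity and the trivial bound $\#\{a_1+d_1=a_2+d_2\}\le AD\min(A,D)$ on the additive energy of $\cA$ and $\cD$ yield a $\xi=0$ contribution to $E$ of order $B^2C^2AD\min(A,D)/p$. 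For $\xi\ne0$ I substitute $a\mapsto\xi a$ to cast $T(\xi,\eta)$ in the form of~\eqref{eq:Sum S} with unit weights on $\cB,\cC$ and the unimodular weight $\ep((\eta/\xi) a)$ on $\cA$; Theorem~\ref{thm:Bound S3}, applied with the ordering $A\ge B\ge C$ from the hypothesis, gives $|T(\xi,\eta)|\ll p^{1/4}A^{3/4}B^{3/4}C^{7/8}$. Summing $|R(\xi,\eta)|^2\le |\sum_{d\in\cD}\ep(\eta d)|^2\cdot |T(\xi,\eta)|^2$ against the Parseval identity $\sum_\eta|\sum_{d\in\cD}\ep(\eta d)|^2 = pD$ bounds the $\xi\ne0$ contribution to $E$ by $p^{1/2}A^{3/2}B^{3/2}C^{7/4}D$.

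Feeding $E\ll B^2C^2AD\min(A,D)/p + p^{1/2}A^{3/2}B^{3/2}C^{7/4}D$ back into $(ABCD)^2\le UV\cdot E$ produces a dichotomy. If the diagonal piece dominates, a short calculation gives $UV\gg p\max(A,D)\ge pA$, the first alternative of the theorem. Otherwise I obtain a lower bound on $UV$ of the form $A^{1/2}B^{1/2}C^{1/4}Dp^{-1/2}$; squaring this inequality and combining it with the trivial upper bound $U\le ABC$ rearranges to the stated $U^3V^2\gg A^4BC^{1/2}D^2 p^{-1}$.

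The subtlest point is extracting the asymmetric exponents $U^3V^2$: plain Cauchy--Schwarz naturally produces a bound of the symmetric shape $(UV)^2$, so one either has to replace Cauchy--Schwarz at the start by a H\"older argument of exponent $3$ (yielding $(ABCD)^3\le (UV)^2\sum_{u,v}N(u,v)^3$ and then bounding $\sum N(u,v)^3\le \min(A,D)\min(B,C)\cdot E$), or feed back the multiplicative bound $U\le ABC$ into the symmetric inequality. Either route delivers the stated exponents; the Fourier analysis and the application of Theorem~\ref{thm:Bound S3} themselves are routine.
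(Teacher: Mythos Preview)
Your Fourier/Cauchy--Schwarz computation is carried out correctly and yields the valid inequality
\[
(UV)^2 \gg A\,B\,C^{1/2}\,D^2\,p^{-1}
\]
when the diagonal term does not dominate. The problem is the last step: this bound is \emph{strictly weaker} than the stated second alternative $U^3V^2\gg A^4BC^{1/2}D^2p^{-1}$, and the ``rearrangement'' you propose does not work. From $U^2V^2\gg ABC^{1/2}D^2p^{-1}$ you would need $U\gg A^3$ to reach the target, whereas $U\le ABC$ points in the wrong direction (and the trivial $U\ge A$ only gives $U^3V^2\gg A^2BC^{1/2}D^2p^{-1}$). Your H\"older variant has the same defect: it still produces a lower bound on $(UV)^2$, not on $U^3V^2$. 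As a concrete check, take $\cA=\cB=\cC=\cG$ a multiplicative subgroup of order $T$ and $\cD=\cS$; the theorem yields $\#(\cG+\cS)\gg T^{5/4}Sp^{-1/2}$, while your inequality gives only $\#(\cG+\cS)\gg T^{1/4}Sp^{-1/2}$.

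The missing idea is that the trilinear bound must be applied with $\cU$ itself as one of the three sets, not with $\cA$. The paper counts solutions to
\[
\frac{u}{bc}+d=v,\qquad (b,c,d,u,v)\in\cB\times\cC\times\cD\times\cU\times\cV,
\]
for which every $(a,b,c,d)\in\cA\times\cB\times\cC\times\cD$ contributes via $(b,c,d,abc,a+d)$, so $N\ge ABCD$. Expanding $N$ by additive characters and applying Theorem~\ref{thm:Bound S3} to $\sum_{u,b,c}\ep(\lambda u b^{-1}c^{-1})$ gives the error term $O\bigl(p^{1/4}U^{3/4}B^{3/4}C^{7/8}D^{1/2}V^{1/2}\bigr)$; it is precisely the factor $U^{3/4}$ here (rather than your $A^{3/4}$) that, after taking fourth powers, produces the asymmetric $U^3V^2$. (A minor aside: there is no hypothesis $A\ge B\ge C$ in the statement; in the paper's setup the relevant ordering is $U\ge B\ge C$, which holds automatically once one labels $\cB,\cC$ so that $B\ge C$.)
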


In particular Theorem~\ref{thm:UV} implies that  
$$
 \max\{U,V\}  \gg \min\{p^{1/2} A^{1/2},  A^{4/5} B^{1/5} C^{1/10} D^{2/5} p^{-1/5}\}.
$$
Furthermore, if $\cG\subseteq  \F_p^*$ is a multiplicative subgroup
of $\F_p^*$ of order $T$ then for any set $ \cS\subseteq \F_p^*$ with $\# \cS = S$,  by 
Theorem~\ref{thm:UV}  we have
\begin{equation}
\label{eq:Set+Group}
\#\( \cG +\cS \) \gg   \min\{p, S T^{5/4} p^{-1/2}\}, 
\end{equation}
which improves the trivial universal lower bound $S$ for $T \ge Cp^{2/5}$ with any  a sufficiently large constant $C$.

As before, we note that using Theorem~\ref{thm:Bound S4}  
one can obtain analogues of 
Theorem~\ref{thm:UV} 
 for more complicated sets. For example, Theorem~\ref{thm:Bound S4}   allows to 
deal with the sets  
 $$
\cU = \cA \cB\cC \cD \mand \cV = \cA +\cE 
$$
with $\cA, \cB, \cC, \cD, \cE \subseteq \F_p^*$. In turn one can obtain the following
version of~\eqref{eq:Set+Group}
$$
\#\( \cG +\cS \) \gg   \min\{p, S T^{3/4} p^{-1/4}\}, 
$$
which is now nontrivial  for $T \ge Cp^{1/3}$ with any  a sufficiently large constant $C$.

%%PROOFs - mention improvements in T(A)
 \subsection{Recent developement}
 \label{sec:StopThePress}

Our results depend on the forthcoming bounds on the quantity $T(\cU)$, c.f. Lemma~\ref{lem:TU}. The recent work~\cite{MPRNRS} contains an improvement when $p^{1/2} \leq U \leq p^{3/5}$, which leads to improved bounds for the exponential sum we consider in certain ranges of the cardinalities $X,U, V, W$.

\section{Preliminaries} 
 \subsection{Background from arithmetic combinatorics}
 \label{sec:AddComb}

Some of the results of this section apply to arbitrary fields $\F_q$ of $q$ elements 
so we formulate them in this form.

The proofs of Theorems~\ref{thm:Bound S3}--\ref{thm:Bound T4} come down to non-trivial upper bounds on the number of solutions to equations with variables in prescribed sets in $\F_p^*$. Of particular importance in our considerations is the following such quantity.

\begin{definition}\label{def:N}
Let $\cU, \cV, \cW \subseteq \F_q^*$. Then $N(\cU, \cV,\cW)$ denotes the number of solutions to 
\[
u_1(v_1-w_1) = u_2 (v_2-w_2)
\]
with $u_1, u_2 \in \cU$, $v_1,v_2 \in \cV$ and $w_1,w_2 \in \cW$.  
\end{definition}

A trivial upper bound for $N(\cU, \cV, \cW)$ in terms terms of the cardinalities $U = \# \cU, V = \# \cV, W = \# \cW$ is
\[
N(\cU, \cV, \cW) \le U V^2 W^2 + U^2 V W.
\]
This is because for each 5-tuple $(u_1, v_1,v_2, w_1,w_2)$ where $v_1 \neq w_1$ and $v_2 \neq w_2$ there is at most one $u_2$ satisfying $u_1(v_1-w_1) = u_2 (v_2-w_2)$; while if $v_1 = w_1$ there is no solution unless $v_2 = w_2$ and in this case all $u_2 \in \cU$ work. Our method for obtaining non-trivial upper bounds for the modulus of exponential sums relies on non-trivial upper bounds for $N(\cU, \cV, \cW)$. The first non-trivial upper bound (for the special case $U=V=W \le p^{2/3}$) can be traced back to the breakthrough paper of Bourgain, Katz and 
Tao~\cite{BKT} on sum-product questions in $\F_p$. Bourgain, Katz and Tao have shown~\cite[Theorem~6.2]{BKT} that, under mild conditions, a set of $n$ points in $\F_p^2$ and a set of $n$ lines in $\F_p^2$ determine $O(n^{3/2-c})$ points-lines incidences for an absolute albeit small $c>0$. From this a non-trivial upper bound for $N(\cU, \cV, \cW)$ follows easily, for example, by modifying~\cite[Lemma~3.5.1]{Dvir} in Dvir's survey~\cite{Dvir}. Progress over the years to the Bourgain-Katz-Tao result, see~\cite{BouGar,Gar0,HelRud,Jones,KatzShen,Li, LiRN,Rud0} and references therein, leads implicitly to improved upper bounds for $N(\cU, \cV, \cW)$.   

The most recent and   significant progress in estimating $N(\cU,\cV,\cW)$ has its 
roots in a bound of Rudnev~\cite{Rud} on the number of incidences between a set of points in $\F_p^3$ and a set of planes in $\F_p^3$. 
Rudnev's work~\cite{Rud} is based on a theorem of Guth and Katz~\cite[Theorem~2.10]{GuKat2} from their solution to the Erd\H{os} distinct distance conjecture for planar sets  and on the 19th century Pl\"ucker-Klein formalism for projective line geometry~\cite{PotWal}. Applying the incidence theorem of  Rudnev~\cite{Rud} to $N(\cU, \cV, \cW)$ requires an elegant trick and has been done by Aksoy Yazici, Murphy, Rudnev and Shkredov~\cite[Theorem~1]{AYMRS}. 
%%PROOFs - numbering of statement in printed version
Given the importance of Rudnev's points-planes incidence theorem to our results, it should be noted that the theorem is essentially sharp. Existing points-lines incidence results in $\F_p^3$ (see~\cite{EllHab,Kol} and also~\cite{GuKat1}) do not seem to work as well for bounding quantities as $N(\cU,\cV,\cW)$. 

We begin our thorough examination of $N(\cU, \cV, \cW)$ with an easy upper bound based on multiplicative characters in $\F_q$.

\begin{lemma}\label{lem:N Fq}
Let $\cU, \cV, \cW \subseteq \F_q^*$ with cardinalities $U$, $V$, $W$, respectively. The following inequality holds:
\[
\left| N(\cU, \cV, \cW) -  \frac{U^2 V^2 W^2}{q-1} \right| \le  q UVW.
\]
\end{lemma}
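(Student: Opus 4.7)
The plan is to count $N(\cU,\cV,\cW)$ via multiplicative character orthogonality on $\F_q^*$. Since $\cU\subseteq\F_q^*$, we have $u_i(v_i-w_i)=0$ only when $v_i=w_i$, so the degenerate case $v_1=w_1$ and $v_2=w_2$ contributes exactly $U^2I^2$, where $I=\#(\cV\cap\cW)$. For the non-degenerate contribution, both sides of the defining equation lie in $\F_q^*$, and one can apply $\mathbf 1[x=y]=\frac{1}{q-1}\sum_\chi\chi(x)\bar\chi(y)$ for $x,y\in\F_q^*$ to decouple the sums, giving
\[
N-U^2I^2=\frac{1}{q-1}\sum_\chi|A_\chi|^2|B_\chi|^2,
\]
with $A_\chi=\sum_{u\in\cU}\chi(u)$ and $B_\chi=\sum_{v\in\cV,\,w\in\cW,\,v\ne w}\chi(v-w)$.

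The principal character contribution ($A_{\chi_0}=U$, $B_{\chi_0}=VW-I$) combined with $U^2I^2$ gives
\[
N=\frac{U^2V^2W^2}{q-1}+\frac{U^2I(qI-2VW)}{q-1}+E,\qquad E=\frac{1}{q-1}\sum_{\chi\ne\chi_0}|A_\chi|^2|B_\chi|^2.
\]
I would bound the algebraic discrepancy elementarily. When $qI\ge 2VW$, the inequality $I^2\le VW$ (from $I\le\min(V,W)$) gives $|I(qI-2VW)|\le qI^2\le qVW$. When $qI<2VW$, applying AM--GM to the nonnegative $I$ and $2VW/q-I$, which have fixed sum $2VW/q$, gives $I(2VW-qI)\le V^2W^2/q\le qVW$, since $VW\le(q-1)^2<q^2$. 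In either case,
\[
\left|\frac{U^2I(qI-2VW)}{q-1}\right|\le\frac{qU^2VW}{q-1}\le qUVW,
\]
using $U\le q-1$.

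For the error $E$, I rely on the Parseval identities $\sum_\chi|A_\chi|^2=(q-1)U$ and $\sum_\chi|B_\chi|^2=(q-1)R$, where $R=\#\{(v_1,v_2,w_1,w_2):v_i\ne w_i,\ v_1-w_1=v_2-w_2\}$. The key sharpening is the bound $|A_\chi|^2\le U(q-1-U)$ valid for $\chi\ne\chi_0$, obtained by combining the trivial $|A_\chi|\le U$ with the dual estimate $|A_\chi|\le q-1-U$ coming from $\sum_{u\in\F_q^*}\chi(u)=0$. Together with $R\le VW\min(V,W)$ from the trivial bound $r_{\cV-\cW}(d)\le\min(V,W)$, this yields $E\le U(q-1-U)\cdot VW\min(V,W)$, which is absorbed into $qUVW$ after a brief case analysis balancing this against the complementary estimate obtained by interchanging the roles of $A_\chi$ and $B_\chi$ (namely $E\le U(VW)^2$).

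The main obstacle will be the final step of bounding $E$: the naive estimate $E\le U^2R$ is inadequate, as illustrated by the extremal case $\cU=\cV=\cW=\F_q^*$, where the stated $qUVW$ bound is essentially tight (achieved by the algebraic discrepancy alone), yet $E=0$ because $A_\chi=0$ identically for $\chi\ne\chi_0$. The sharpened bound $|A_\chi|^2\le U(q-1-U)$ captures this vanishing quantitatively for general $\cU$, and I expect it to close the argument up to an absolute constant.
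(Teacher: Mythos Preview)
Your character decomposition matches the paper's, and your treatment of the ``algebraic discrepancy'' term $\frac{U^2 I(qI-2VW)}{q-1}$ is in fact more careful than the paper's (which folds the principal-character defect into the zero-solution count without isolating $I$). The genuine gap is in your bound for
\[
E=\frac{1}{q-1}\sum_{\chi\ne\chi_0}|A_\chi|^2|B_\chi|^2.
\]
Your two estimates $E\le U(q-1-U)\,VW\min(V,W)$ and $E\le U(VW)^2$ do \emph{not} combine to yield $E\ll qUVW$, even up to an absolute constant. Take $U=V=W\asymp q^{2/3}$: the target $qUVW$ is $\asymp q^3$, whereas your first bound is $\asymp q^{11/3}$ and your second is $\asymp q^{10/3}$; no weighted combination closes the gap. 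The underlying problem is that Parseval only controls the \emph{average} of $|B_\chi|^2$, and the constraints you impose are consistent with a single non-principal character carrying $|B_{\chi_1}|^2\asymp (q-1)R$, which for structured $\cV,\cW$ (say an arithmetic progression, where $R\asymp V^3$) can vastly exceed $qVW$. Your sharpened bound $|A_\chi|^2\le U(q-1-U)$ is a nice observation, but it cannot substitute for pointwise control of $B_\chi$.

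What is missing, and what the paper invokes, is the \emph{pointwise} bilinear character bound
\[
\max_{\chi\ne\chi_0}\left|\sum_{v\in\cV}\sum_{w\in\cW}\chi(v-w)\right|\le\sqrt{qVW},
\]
proved by Cauchy--Schwarz in $v$, extending the $v$-sum to all of $\F_q$, and evaluating $\sum_{v\in\F_q}\chi(v-w_1)\overline{\chi}(v-w_2)=-1$ for $w_1\ne w_2$ via the M\"obius substitution $t=(v-w_1)/(v-w_2)$. With this in hand one obtains directly
\[
E\le\frac{qVW}{q-1}\sum_{\chi\ne\chi_0}|A_\chi|^2=\frac{qVW}{q-1}\bigl((q-1)U-U^2\bigr)=qUVW-\frac{qU^2VW}{q-1},
\]
and the saving $\frac{qU^2VW}{q-1}$ exactly absorbs your discrepancy bound $|\text{disc}|\le\frac{qU^2VW}{q-1}$, giving the claimed inequality with constant $1$.
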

\begin{proof}
Clearly, the number of {\it zero-solutions\/}, that is, solutions  with $u_1(v_1-w_1) = u_2 (v_2-w_2) = 0$ is at most $U^2VW$, because we must have $v_1=w_1$ and $v_2 =w_2$. We use standard properties of multiplicative characters to bound the number $N(\cU, \cV, \cW)^*$ of non-zero solutions.  
Let $\Omega$ denote the set of all $q-1$  multiplicative characters of $\F_q$
and let $\Omega^*$ be the set of nonprincipal characters;
we refer to~\cite[Chapter~3]{IwKow} for a background on characters. In particular, 
using the orthogonality of characters, we write
 \begin{align*}
N(\cU, \cV, \cW)^* &= \frac{1}{q-1}  \sum_{\chi \in \Omega} \sum_{u_1,u_2 \in \cU}  \sum_{v_1,v_2 \in \cV}
 \sum_{w_1,w_2 \in \cW} \chi\left(\frac{u_1 (v_1-w_1)}{u_2 (v_2-w_2)}\right) \\
&  = \frac{1}{q-1} \sum_{\chi \in \Omega} \left|\sum_{u  \in \cU} \sum_{v  \in \cV}  \sum_{w  \in \cW} \chi(u (v-w))\right|^2\\
& = \frac{U^2 V^2 W^2}{q-1} + \frac{1}{q-1} \sum_{\chi \in \Omega^*} \left|\sum_{u  \in \cU } \chi(u )\right|^2
\left| \sum_{\substack{v  \in \cV \\ w  \in \cW}} \chi(v -w )\right|^2.
\end{align*}
Recalling the well-known analogue of~\eqref{eq:bilin}:
\begin{equation}
\label{eq:double char}
 \max _{\chi \in \Omega^*}  
\left| \sum_{\substack{v \in \cV \\ w \in \cW}} \chi(v-w)\right| \le \sqrt{q VW}, 
\end{equation}
we obtain
$$
\left| N(\cU, \cV, \cW)^* -  \frac{U^2 V^2 W^2}{q-1} \right|  \le 
 \frac{q VW }{q-1} \sum_{\chi \in \Omega^*} \left|\sum_{u  \in \cU } \chi(u )\right|^2. 
$$
Using the orthogonality of characters again, we derive
\begin{align*}
\left| N(\cU, \cV, \cW)^* -  \frac{U^2 V^2 W^2}{q-1} \right|  
& \le \ \frac{q VW }{q-1} \left(\sum_{\chi \in \Omega} \left|\sum_{u  \in \cU } \chi(u )\right|^2  - U^2 \right) \\
& \le q UVW - U^2 VW , 
\end{align*}
which concludes the proof. 
\end{proof}

The next step is to obtain a complementary bound for small sets  $\cU, \cV, \cW \subseteq \F_p^*$ in  a prime field $\F_p$. 
It is based on the points-planes incidence bound of  Rudnev~\cite{Rud} 
and in particular on its application described by Aksoy Yazici, Murphy, Rudnev and 
Shkredov~\cite[Theorem~1]{AYMRS}. 
%%PROOFs - numbering of statement in printed version

\begin{lemma}
\label{lem:Collisions}
Let $\cU, \cV, \cW \subseteq \F_p^*$ with cardinalities $U$, $V$, $W$, respectively,  
and set $M =\max\{U,V,W\}$. Suppose that 
$UVW  \ll p^2$. The following inequality holds:
\[
N(\cU, \cV, \cW) \ll U^{3/2}V^{3/2}W^{3/2} + M UVW.
\]
\end{lemma}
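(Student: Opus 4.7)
My plan is to realise $N(\cU, \cV, \cW)$ as a point-plane incidence count in $\F_p^3$ and then to invoke the point-plane incidence bound of Rudnev~\cite{Rud}, following the scheme of Aksoy Yazici, Murphy, Rudnev and Shkredov~\cite[Theorem~1]{AYMRS}.

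First I would rewrite the defining equation $u_1(v_1 - w_1) = u_2(v_2 - w_2)$ in the linearised form
$$
u_1 v_1 - u_2 v_2 + u_2 w_2 = u_1 w_1.
$$
Freezing the triple $(u_1, u_2, w_1) \in \cU \times \cU \times \cW$ turns this into an affine plane in $\F_p^3$ with normal vector $(u_1, -u_2, u_2)$, while the triple $(v_1, v_2, w_2) \in \cV \times \cV \times \cW$ represents a point of $\F_p^3$. This sets up an incidence configuration with point set $\cP$ of cardinality $V^2 W$ and plane family $\Pi$ of cardinality at most $U^2 W$, and one has $N(\cU, \cV, \cW) = I(\cP, \Pi)$. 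By permuting which variables are ``frozen'' one obtains several analogous realisations with differing cardinality profiles; I would pick the one for which the hypotheses of Rudnev's theorem are most easily met, in particular so that the larger of $|\cP|, |\Pi|$ does not exceed $p^2$, which is ensured by the standing assumption $UVW \ll p^2$.

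Next I would apply Rudnev's point-plane incidence bound~\cite{Rud}. With the cardinalities above, the product $|\cP|\,|\Pi| = U^2 V^2 W^2$ is the same for every natural decomposition, so the main Szemer\'edi--Trotter-type term of Rudnev's theorem contributes
$$
\bigl(|\cP|\,|\Pi|\bigr)^{3/4} = U^{3/2} V^{3/2} W^{3/2},
$$
matching the main term of the lemma. The error term coming from Rudnev's ``richest line'' parameter is handled by an algebraic analysis of the concrete family of planes: for a line $\ell = p_0 + \F_p \cdot d$ with direction $d = (d_1, d_2, d_3)$, the containment condition $(u_1, -u_2, u_2) \cdot d = 0$ becomes $u_1 d_1 = u_2(d_2 - d_3)$; in the generic case $(d_1 \neq 0$ and $d_2 \neq d_3)$ this fixes the ratio $u_2/u_1$, and together with the constant-term condition that determines $w_1$ this allows at most $O(M)$ planes through $\ell$, where $M = \max\{U, V, W\}$.

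The principal obstacle is the treatment of the exceptional lines ($d_1 = 0$ and/or $d_2 = d_3$) for which the number of planes through $\ell$ could a priori be much larger; one has to argue that either such lines contain very few points of $\cP$ (because the $x$-coordinate, or the difference $y - z$, is forced to a single value which constrains membership in $\cV$, resp.\ in $\cV - \cW$), or that their total contribution to $I(\cP, \Pi)$ is directly absorbed into $O(MUVW)$ via a separate trivial count. Once this case analysis is complete, substituting into Rudnev's theorem yields
$$
N(\cU, \cV, \cW) = I(\cP, \Pi) \ll U^{3/2} V^{3/2} W^{3/2} + M UVW,
$$
as required.
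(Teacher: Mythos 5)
Your overall strategy---recasting $N(\cU,\cV,\cW)$ as a point--plane incidence count in $\F_p^3$ and invoking Rudnev's theorem---is the route taken in the literature, but note that the paper itself does not redo this work: its proof of Lemma~\ref{lem:Collisions} is a one-line reduction to \cite[Theorem~11]{AYMRS}, taking $A=\cU$ and the line family $\{y=v(x+w):\ v\in\cV,\ w\in\cW\}$, so you are in effect reproving the AYMRS result. The problem is that your application of Rudnev's theorem misstates its main term. The theorem says that if $|\cP|\le|\Pi|$ and $|\cP|=O(p^2)$ then $I(\cP,\Pi)\ll |\Pi|\,|\cP|^{1/2}+k|\Pi|$: the main term is $\max\cdot\sqrt{\min}$, not $(|\cP|\,|\Pi|)^{3/4}$, and the two coincide only when $|\cP|=|\Pi|$. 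With your decomposition $|\cP|=V^2W$, $|\Pi|=U^2W$ (say $U\ge V$), Rudnev actually gives the main term $U^2VW^{3/2}=U^{3/2}V^{3/2}W^{3/2}\,(U/V)^{1/2}$, which exceeds the claimed bound whenever $U\ne V$ (for instance $U=p^{3/5}$, $V=W=p^{1/10}$ gives $p^{1.45}$ against the target $p^{1.4}$), and the rich-line term $k|\Pi|$ suffers the same defect. Your remark that ``the product $|\cP|\,|\Pi|$ is the same for every natural decomposition'' is precisely why the choice of decomposition is the crux of the argument rather than a matter of convenience; also, the hypothesis of Rudnev's theorem is a bound on the \emph{smaller} of the two families, not the larger.

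The repair---the ``elegant trick'' the paper attributes to \cite{AYMRS}---is to split the six variables into two \emph{balanced} triples: write the equation as $(v_1-w_1)u_1-u_2v_2+u_2w_2=0$, take the point set $\cP=\cU\times\cV\times\cW$ in the coordinates $(u_1,v_2,w_2)$, and for each $(u_2,v_1,w_1)\in\cU\times\cV\times\cW$ the plane through the origin with normal vector $(v_1-w_1,-u_2,u_2)$. Both families have size $UVW\ll p^2$, so Rudnev's theorem yields the main term $(UVW)^{3/2}$; a line meets the grid $\cU\times\cV\times\cW$ in at most $\max\{U,V,W\}=M$ points, which produces the term $M\,UVW$. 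Even then some care is required: distinct triples $(u_2,v_1,w_1)$ may determine the same plane (the normal is only defined up to scaling), so one needs a weighted incidence count or a Cauchy--Schwarz popularity argument, and the degenerate solutions with $v_1=w_1$ (hence $v_2=w_2$) must be counted separately, contributing $O(U^2VW)\le M\,UVW$. As written, your argument does not reach the stated bound.
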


\begin{proof}
The result follows from~\cite[Theorem~11]{AYMRS} by setting $A = \cU$ and 
%%PROOFS $L \simeq \cV \times W$ and numbering of statement in printed version
$L \simeq \cV \times \cW$ 
to be the set of lines $\{y = v x + vw~:~v \in \cV, \ w \in \cW\}$.
\end{proof}

Combining the two results gives an upper bound on $N(\cU, \cV, \cW)$ over $\F_p$.

\begin{cor}\label{cor:N Fp}
Let $\cU, \cV, \cW \subseteq \F_p^*$ with cardinalities $U$, $V$, $W$, respectively, and set $M=\max\{U,V,W\}$. The following inequality holds.
\[
N(\cU, \cV, \cW)  \ll \frac{U^2V^2W^2}{p} + U^{3/2}V^{3/2}W^{3/2} + M UVW.
\]
\end{cor}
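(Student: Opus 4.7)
My plan is to prove Corollary~\ref{cor:N Fp} by dichotomizing on the size of $UVW$ relative to $p^{2}$, applying one of the two preceding lemmas in each regime and checking that the outputs line up with the three terms on the right-hand side.

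First, in the regime $UVW \le c\, p^{2}$ (for the absolute constant $c$ implicit in the hypothesis of Lemma~\ref{lem:Collisions}), I would invoke Lemma~\ref{lem:Collisions} verbatim. It returns
$$
N(\cU, \cV, \cW) \ll U^{3/2} V^{3/2} W^{3/2} + M\, UVW,
$$
which is precisely the second and third terms of the claimed bound, so nothing further is required in this case.

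Next, in the complementary regime $UVW \ge c\, p^{2}$, I would apply Lemma~\ref{lem:N Fq} with $q = p$, which yields
$$
N(\cU, \cV, \cW) \le \frac{U^{2} V^{2} W^{2}}{p-1} + p\, UVW \ll \frac{U^{2} V^{2} W^{2}}{p} + p\, UVW.
$$
The only step to check is that the second term is absorbed by the first under our case assumption: the inequality $UVW \ge c\, p^{2}$ rearranges to $p\, UVW \le c^{-1}(UVW)^{2}/p = c^{-1} U^{2} V^{2} W^{2}/p$, so the whole right-hand side collapses to $O\!\left(U^{2} V^{2} W^{2}/p\right)$, which is exactly the first term of the corollary.

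Combining the two regimes yields the asserted three-term bound. The argument is essentially routine once the two ingredients are in hand; there is no genuine obstacle. The only observation worth highlighting is the natural crossover $UVW \asymp p^{2}$, at which the Rudnev-type incidence estimate of Lemma~\ref{lem:Collisions} ceases to improve on the character-sum bound of Lemma~\ref{lem:N Fq}, so that the two lemmas tile the full range of parameters without overlap loss.
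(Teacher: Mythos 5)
Your proof is correct and is essentially identical to the paper's own argument: both split on whether $UVW$ exceeds a constant multiple of $p^{2}$, apply Lemma~\ref{lem:Collisions} in the small regime and Lemma~\ref{lem:N Fq} in the large regime, and observe that in the latter case the term $p\,UVW$ is absorbed into $U^{2}V^{2}W^{2}/p$. Nothing further is needed.
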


\begin{proof}
When $UVW \ge p^2$ we apply Lemma~\ref{lem:N Fq}. It is easy to see that
 in this range the $U^2V^2W^2 /p$ term dominates the other two. 
When $UVW <p^2$ we apply Lemma~\ref{lem:Collisions}.
\end{proof}

In particular,  we see from Corollary~\ref{cor:N Fp} that when $U = V = W$ we have 
\[
N(\cU, \cV, \cW)  \ll \frac{U^6}{p} + U^{9/2}.  
\]

We also need two other quantities similar to $N(\cU, \cV,\cW)$. We recall that the {\it multiplicative energy\/} $E_\times (\cU)$ of  a set $\cU \subseteq \F_p$, is defined 
 as the number of solutions to the equation 
 $$
 u_1u_2 =  u_3 u_4, \qquad  u_1,u_2 , u_3, u_4 \in \cU .
 $$
 It is known to play a crucial role in bounds of various exponential sums, 
 see, for example,~\cite{Gar2}.  
 However, here our argument relies on bounds of  the multiplicative energy 
  of the difference set with multiplicities counted.

\begin{definition}\label{def: Dx}
Let $\cU \subseteq \F_q^*$. Then  $D_\times (\cU)$  denotes the number of solutions to the equation  
$$
(u_1-v_1)(u_2 - v_2)=  (u_3-v_3)(u_4-v_4), \quad  u_i,v_i, \in \cU, \ i =1,2,3,4 .
 $$
\end{definition}

An essentially optimal upper bound for $D_\times(\cU)$ over the real numbers has been obtained by Roche-Newton and Rudnev~\cite{RudR-N} by an application of the Guth--Katz theorem~\cite{GuKat2}. The quantity $D_\times(\cU)$ does not seem to have been studied in the finite field context until recently (see~\cite{Pet}). We will present a different bound, which depends on a third quantity that, moreover, features an autonomous role in the proofs of the exponential sum estimates is the following.

\begin{definition}\label{def:T}
Let $\cU \subseteq \F_q^*$. Then  $T (\cU)$  denotes the number of solutions to the equation  
$$
\frac{u_1-v}{u_2 - v}=  \frac{u_3-w}{u_4-w}, \quad  u_i,v, w \in \cU, \ i =1,2,3,4.
 $$
\end{definition}

The notation reflects the fact that the function  $T(\cU)$ counts the number of collinear triplets of points in $\cU \times \cU \subseteq \F_q^2$. A more or less optimal upper bound for $T(\cU)$ over the real numbers has been  obtained by Elekes and Ruzsa in~\cite{EleRuz} by an application of the Szemer\'edi--Trotter theorem~\cite{SzeTro}. In the finite field context, $T(\cU)$ is studied by Aksoy Yazici, Murphy, Rudnev and Shkredov in~\cite{AYMRS}.

We begin our examination of $D_\times(\cU)$ and $T(\cU)$ with the following elementary estimate that relates the two quantities.
 
 \begin{lemma}
\label{lem:DU TU}  
For any set $\cU \subseteq \F_p$ of cardinality $\# \cU = U$, we have
$$
D_\times (\cU)  \ll U^2 T(\cU) + U^6.
$$
\end{lemma}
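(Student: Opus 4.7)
The plan is to decompose $D_\times(\cU)$ by grouping $8$-tuples according to the common ratio
\[
\lambda = \frac{u_1-v_1}{u_3-v_3} = \frac{u_4-v_4}{u_2-v_2} \in \F_p^*,
\]
and then to reduce the resulting sum of squares to a count of collinear triples of points in $\cU \times \cU$ (which is exactly $T(\cU)$) via Cauchy--Schwarz. First I would peel off the contribution of \emph{degenerate} tuples, i.e.\ those with at least one vanishing difference $u_i - v_i$. A direct inclusion--exclusion gives at most $(2U^3 - U^2)^2 = O(U^6)$ such tuples, since both sides of the defining equation must then vanish. Grouping the remaining tuples by $\lambda$ yields
\[
D_\times(\cU) = \sum_{\lambda \in \F_p^*} R^*(\lambda)^2 + O(U^6),
\]
where $R^*(\lambda) := \#\{(a,b,c,d) \in \cU^4 : c \neq d,\ a - b = \lambda(c-d)\}$ (for $\lambda \neq 0$ the condition $a \neq b$ is then automatic).

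Next I would write $R^*(\lambda) = \sum_{(a,c) \in \cU^2} K(a,c,\lambda)$, where $K(a,c,\lambda)$ counts admissible $(b,d) \in \cU^2$, and apply the Cauchy--Schwarz inequality in the $U^2$ outer variables, followed by an interchange of summation, to obtain
\[
\sum_{\lambda} R^*(\lambda)^2 \ \le\ U^2 \sum_{(a,c) \in \cU^2}\sum_{\lambda} K(a,c,\lambda)^2.
\]
The key observation is that $\sum_\lambda K(a,c,\lambda)^2$ counts $4$-tuples $(b,d,b',d') \in \cU^4$ (with $d, d' \neq c$) satisfying
\[
(a - b)(c - d') = (a - b')(c - d),
\]
and this identity is exactly the vanishing of the $2\times 2$ determinant expressing the collinearity of the three points $(c,a),\ (d,b),\ (d',b') \in \F_p^2$. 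Summing over the pivot $(a,c) \in \cU^2$ therefore bounds this expression by the number of ordered collinear triples of points in $\cU \times \cU$, subject to the extra constraint that the second and third points avoid the vertical line through the pivot. Dropping this constraint, the count is bounded above by $T(\cU)$ by definition.

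Combining everything gives $\sum_\lambda R^*(\lambda)^2 \le U^2 T(\cU)$ and hence the desired bound $D_\times(\cU) \ll U^2 T(\cU) + U^6$. The main step that needs careful verification is the collinearity identification: the algebraic equation forced on $(b,d,b',d')$ by the existence of a common $\lambda$ must match exactly the $2 \times 2$ determinant condition for three points in $\F_p^2$, since otherwise the Cauchy--Schwarz step would produce a quantity unrelated to $T(\cU)$ and the bound would degenerate into the circular estimate $D_\times \ll U^7$. This matching is a direct expansion, but it is the pivot on which the whole lemma turns.
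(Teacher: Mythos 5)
Your argument is correct and is essentially the paper's own proof: both decompose the non-degenerate solutions by the common ratio $\lambda$, absorb the degenerate tuples into the $O(U^6)$ term, apply Cauchy--Schwarz over a pair of pivot variables (you pivot on $(a,c)$, the paper on the pair $(v,w)$ of subtrahends, which is the same by symmetry), and identify the resulting fourth moment as a count of collinear triples bounded by $T(\cU)$. The collinearity identification you flag as the key step is indeed exactly the determinant/cross-ratio rewriting the paper performs, so there is nothing further to verify.
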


\begin{proof} Clearly $D_\times (\cU) =  D_\times^* (\cU) +O(U^6)$
where $ D_\times^* (\cU)$  is the number of solutions to the equation 
$$
 \frac{u_1-v_1}{u_2 - v_2}=  \frac{u_3-v_3}{u_4-v_4} \ne 0, \quad  u_i,v_i, \in \cU, \ i =1,2,3,4 .
 $$
Let $J(\lambda)$ be the number of quadruples $(u_1,u_2,v,w) \in \cU^4$ with 
\begin{equation}
\label{eq:Collin}
\frac {u_1-v}{u_2 - w}= \lambda
\end{equation}
and let   $J_{v,w}(\lambda)$  be the number of pairs $(u_1,u_2) \in \cU^4$ 
for which~\eqref{eq:Collin} holds. 

Then, by the Cauchy inequality, we have
\begin{align*}
D_\times^* (\cU) & =  \sum_{\lambda \in \F_p^*} J(\lambda) ^2
=  \sum_{\lambda \in \F_p^*} 
\(\sum_{v,w \in \cU}  J_{v,w}(\lambda)\)^2 \\
& \le U^2 \sum_{\lambda \in \F_p^*} 
\sum_{v,w \in \cU}  J_{v,w}(\lambda)^2  = 
U^2 \sum_{v,w \in \cU}  \sum_{\lambda \in \F_p^*}  J_{v,w}(\lambda)^2. 
\end{align*}
Using that 
\begin{align*}
 \sum_{\lambda \in \F_p^*} J_{v,w}(\lambda)^2 & = \sum_{\lambda \in \F_p^*} \#\left\{(u_1,u_2,u_3,u_4)\in \cU^4~:~\frac{u_1-v}{u_2 - w} = \frac{u_3-v}{u_4 - w} = \lambda\right\}\\
& = \#\left\{(u_1,u_2,u_3,u_4)\in \cU^4~:~\frac{u_1-v}{u_3 - v} = \frac{u_2-w}{u_4 - w}\ne 0\right\}, 
\end{align*}
and renaming the variables $(v,w) \to (u_3,u_2)$, we immediately obtain the desired result. 
\end{proof}

We now need the bound on $T(\cU)$ given by Petridis~\cite{Pet},  which (for $U \le p^{2/3}$) is  based on  a result of Aksoy Yazici, Murphy, Rudnev and 
Shkredov~\cite[Proposition~2.5]{AYMRS} 
%%PROOFs - numbering of statement in printed version
(see also~\cite[Theorem~10]{Shkr}). 

 \begin{lemma}
\label{lem:TU}  
For any set $\cU \subseteq \F_p$ of cardinality $U$, we have
$$
T (\cU)  \ll \frac{U^6}{p} + U^{9/2}.
$$
\end{lemma}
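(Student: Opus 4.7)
The plan is to identify $T(\cU)$ with the number of ordered collinear triples of points in the Cartesian product $\cU \times \cU \subseteq \F_p^2$, and then to invoke the Cartesian-product collinear-triples bound of Aksoy Yazici, Murphy, Rudnev and Shkredov~\cite{AYMRS}, which itself rests on Rudnev's points-planes incidence theorem~\cite{Rud}.

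First I would clear denominators in the defining equation to write it as $(u_1-v)(u_4-w)=(u_2-v)(u_3-w)$. This is the vanishing of the $2 \times 2$ determinant with rows $(u_1-v,u_3-w)$ and $(u_2-v,u_4-w)$, which is precisely the condition that the three points $P=(v,w)$, $Q=(u_2,u_4)$, $R=(u_1,u_3) \in \F_p^2$ are collinear. The degenerate 6-tuples in which some factor on either side vanishes contribute only $O(U^4)$ to $T(\cU)$, so up to this (dominated) error $T(\cU)$ equals the number of ordered collinear triples in $\cU \times \cU$.

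Next I would apply \cite[Proposition~2.5]{AYMRS} (see also \cite[Theorem~10]{Shkr}), which gives the bound $\ll U^{9/2}$ on the number of such collinear triples in the range $U \le p^{2/3}$. That proof groups triples by slope and reinterprets collinearity as point-plane incidences in $\F_p^3$, to which Rudnev's bound is then applied. For the complementary range $U > p^{2/3}$, the target term $U^6/p$ dominates $U^{9/2}$ and can be extracted either from the full form of Rudnev's incidence theorem (which carries an $nm/p$ contribution once the product of the point- and plane-counts exceeds $p^2$), or by a direct character-sum argument mirroring the proof of Lemma~\ref{lem:N Fq}: one writes $T(\cU) = \sum_{\lambda \in \F_p^*} J(\lambda)^2$ with $J(\lambda)$ counting triples $(u,u',v) \in \cU^3$ satisfying $u - v = \lambda(u' - v)$, and controls the sum using multiplicative character orthogonality together with the bilinear estimate~\eqref{eq:double char}.

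The only genuine difficulty lies in the Aksoy Yazici--Murphy--Rudnev--Shkredov collinear-triples inequality, which sits on top of Rudnev's deep points-planes incidence theorem. The remaining ingredients --- the dictionary between $T(\cU)$ and ordered collinear triples, the bookkeeping for degenerate 6-tuples, and the split according to $U$ versus $p^{2/3}$ --- are all routine.
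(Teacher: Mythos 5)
The paper does not actually prove this lemma --- it is quoted from Petridis~\cite{Pet}, with the remark that for $U \le p^{2/3}$ it rests on \cite[Proposition~2.5]{AYMRS} --- and your sketch reconstructs precisely that route: the dictionary between $T(\cU)$ and ordered collinear triples in $\cU\times\cU$, the $O(U^4)$ degeneracy count, the Rudnev-based bound below $p^{2/3}$, and a separate elementary treatment above it. The one detail worth flagging is that in the range $U>p^{2/3}$ the character-sum variant must exploit the positivity of $S(\chi)=\sum_{v\in\cU}\bigl|\sum_{u\in\cU}\chi(u-v)\bigr|^2$, bounding $\sum_{\chi\ne\chi_0}S(\chi)^2\le\max_{\chi\ne\chi_0}S(\chi)\cdot\sum_{\chi\ne\chi_0}S(\chi)\ll Up\cdot pU^2$ so that $T(\cU)\ll U^6/p+pU^3+U^4$, which suffices exactly when $U\ge p^{2/3}$; a crude bound by $\max_{\chi\ne\chi_0}S(\chi)^2$ alone would leave a gap in the range $p^{2/3}<U<p^{3/4}$.
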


Thus combining Lemmas~\ref{lem:DU TU} and~\ref{lem:TU} we obtain 

 \begin{cor}
\label{cor:DU}  
For any set $\cU \subseteq \F_p$ of cardinality $U$, we have
$$
D_\times (\cU)  \ll \frac{U^8}{p} + U^{13/2}. 
$$
\end{cor}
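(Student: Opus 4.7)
The plan is to deduce Corollary~\ref{cor:DU} by simply chaining the two preceding lemmas together. Lemma~\ref{lem:DU TU} reduces the estimation of $D_\times(\cU)$ to the estimation of $T(\cU)$ at the cost of a factor $U^2$ and an additive term $U^6$; Lemma~\ref{lem:TU} then provides a sharp bound on $T(\cU)$. So the entire work has already been done, and what remains is a purely routine substitution.

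Concretely, I would first apply Lemma~\ref{lem:DU TU} to write
$$
D_\times (\cU) \ll U^2 T(\cU) + U^6,
$$
and then insert the bound $T(\cU) \ll U^6/p + U^{9/2}$ from Lemma~\ref{lem:TU}. This produces
$$
D_\times (\cU) \ll \frac{U^8}{p} + U^{13/2} + U^6.
$$
Finally I would remark that $U^6 \le U^{13/2}$ (since we may assume $U \ge 1$, as otherwise the statement is trivial), so the last term is absorbed into the $U^{13/2}$ term, yielding the claimed bound.

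There is no genuine obstacle here: the only decision is a cosmetic one about how to absorb the additive $U^6$ contribution, and the only even mildly delicate aspect is keeping track of the implied constants, which are absolute throughout. All the real content was already invested in Lemma~\ref{lem:DU TU} (whose proof uses a Cauchy--Schwarz reduction from collinear quadruples to collinear triples) and in Lemma~\ref{lem:TU} (which rests on the Aksoy Yazici--Murphy--Rudnev--Shkredov machinery, and ultimately on Rudnev's point--plane incidence bound).
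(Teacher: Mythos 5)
Your proposal is correct and coincides with the paper's own derivation: the paper introduces Corollary~\ref{cor:DU} with the single phrase ``combining Lemmas~\ref{lem:DU TU} and~\ref{lem:TU}'', which is precisely the substitution you carry out, with the $U^6$ term absorbed into $U^{13/2}$ exactly as you note.
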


In particular, we see from Corollary~\ref{cor:DU} that when $U \le p^{2/3}$ then 
the second term dominates and we derive
$D_\times(\cU) \ll U^{13/2}$. We also remark that 
Rudnev,   Shkredov and Stevens~\cite[Theorem~6]{RSS} have given a related  result.

 \subsection{Exponential sums and differences}
 \label{sec:ExpEnergy}

We now link  multilinear exponential sums~\eqref{eq:Sum T} to exponential sums with differences. For  a technical reason 
it is also convenient  for us to prove this in the setting of arbitrary finite fields and additive characters. This may also be 
useful in further applications. 

Namely, 
 for sets $\cX_1,\dots, \cX_n \subseteq \F_q$, weights $\omega_1,\dots,\omega_n$ and an additive character $\psi$ 
 of $\F_q$, 
we define 
$$
T_\psi(\cX_1, \ldots, \cX_n;  \omega_1, \ldots, \omega_n) = \ssum_{\vec{x} \in\cX_1\times \ldots \times \cX_n} 
\omega_1(\vec{x}) \ldots \omega_n(\vec{x}) \psi(x_1 \ldots x_n).
$$
Note that  our bound is  uniform in $\psi$    (and  is actually trivial when $\psi=\psi_0$ is the principal character).

 \begin{lemma}
\label{lem:ExpSum} 
Let $n \ge 2$. For any additive character $\psi$ 
 of $\F_q$, sets $\cX_i \subseteq \F_q$ of cardinalities $\# \cX_i = X_i$, and weights 
 $\omega_i= (\omega_i(\vec{x}))_{\vec{x} \in \F_p^n}$ such that $\omega_i(\vec{x})$ does not 
depend on the $i$th coordinate of   $\vec{x} = (x_1, \ldots, x_n)$ 
and
$$
\max_{\vec{x} \in \F_p^n} |\omega_i(\vec{x})| \le 1, 
$$
for $i =1, \ldots, n$, we have
\begin{equation*}
\begin{split}
|T_\psi(\cX_1&, \ldots, \cX_n;  \omega_1, \ldots, \omega_n)|^{2^{n-1}}\\
&\le  X_1^{2^{n-1}-1}(X_2\ldots X_{n})^{2^{n-1}-2}  \sum_{x_2, y_2 \in\cX_2}  \ldots \sum_{x_{n}, y_{n} \in\cX_{n}} \\
& \qquad \qquad \qquad  \qquad   \quad  \left| \sum_{x_1 \in\cX_1}   \psi( x_1 (x_2-y_2) \ldots (x_{n}-y_{n})) \right|.
\end{split}
\end{equation*}
\end{lemma}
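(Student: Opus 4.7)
The plan is a proof by induction on $n$, where each step applies Cauchy--Schwarz once to peel off a weight and create one new difference variable. The character $\psi$ is kept arbitrary so that the induction hypothesis can be invoked with shifted characters $\psi_t(u) := \psi(tu)$.

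Base case $n=2$: since $\omega_2$ does not depend on $x_2$,
\[
T_\psi(\cX_1,\cX_2;\omega_1,\omega_2) = \sum_{x_1 \in \cX_1} \omega_2(x_1) \sum_{x_2 \in \cX_2} \omega_1(x_2)\, \psi(x_1 x_2).
\]
Cauchy--Schwarz in $x_1$ together with $|\omega_2|\le 1$, expansion of the square, interchange of summation, and $|\omega_1|\le 1$ applied to the resulting $(x_2,y_2)$-coefficient yields the base case.

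Inductive step. Assume the bound for $n-1 \ge 2$. Isolate the $x_2$-sum (out of which $\omega_2$ can be pulled) and apply Cauchy--Schwarz to the outer sum over $(x_1,x_3,\ldots,x_n)$ with $|\omega_2|\le 1$. Expanding the modulus squared produces the pair $(x_2,y_2)\in\cX_2^2$ with phase $\psi\bigl((x_2-y_2)x_1 x_3 \cdots x_n\bigr)$; swapping this pair to the outside and taking absolute values gives
\[
|T_\psi|^2 \le X_1 X_3 \cdots X_n \sum_{x_2, y_2 \in \cX_2} |R(x_2, y_2)|,
\]
where $R(x_2,y_2)$ is an $(n-1)$-fold sum over $(x_1,x_3,\ldots,x_n)$ against the additive character $\psi_t$, $t=x_2-y_2$, with $n-1$ weights of the form $\omega_k(\cdot,x_2,\cdot)\,\overline{\omega_k(\cdot,y_2,\cdot)}$, $k\ne 2$. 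After relabelling $(x_1,x_3,\ldots,x_n) \leftrightarrow (z_1,\ldots,z_{n-1})$, each such weight is bounded by $1$ and inherits the required independence: the $\omega_k$-derived weight does not depend on $z_1 = x_1$ for $k=1$, and does not depend on $z_{k-1} = x_k$ for $k\ge 3$. Hence the induction hypothesis applies to each $R(x_2,y_2)$, giving
\[
|R(x_2, y_2)|^{2^{n-2}} \le X_1^{2^{n-2}-1} (X_3 \cdots X_n)^{2^{n-2}-2} \sum_{x_3, y_3, \ldots, x_n, y_n} \left| \sum_{x_1} \psi\bigl(x_1(x_2-y_2)(x_3-y_3)\cdots(x_n-y_n)\bigr) \right|.
\]

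To conclude, raise $|T_\psi|^2 \le X_1 X_3 \cdots X_n \sum_{x_2,y_2} |R|$ to the power $2^{n-2}$ and apply the power-mean inequality $\bigl(\sum_j a_j\bigr)^{2^{n-2}} \le (X_2^2)^{2^{n-2}-1} \sum_j a_j^{2^{n-2}}$ to move the exponent inside the $(x_2,y_2)$-summation; then substitute the induction bound. Collecting exponents produces $X_1$ to the power $2^{n-2}+(2^{n-2}-1)=2^{n-1}-1$, $X_2$ to the power $2(2^{n-2}-1)=2^{n-1}-2$, and each of $X_3,\ldots,X_n$ to the power $2^{n-2}+(2^{n-2}-2)=2^{n-1}-2$, matching the claim. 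The main obstacle is the bookkeeping just described: checking that after the Cauchy--Schwarz expansion the $n-1$ new weights are bounded by $1$ and retain the non-dependence property required by the induction hypothesis, and that $\psi_t$ is a legitimate additive character for every $t\in\F_q$ (principal when $t=0$, in which case the inner sum is bounded trivially by the summand count, so the overall estimate still holds).
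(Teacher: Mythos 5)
Your proof is correct and follows essentially the same route as the paper's: induction on $n$, with one application of the triangle inequality to discard a weight, one Cauchy--Schwarz to double a variable, the power-mean (H\"older) inequality to push the exponent $2^{n-2}$ inside the new double sum, and the inductive hypothesis invoked with the twisted character $\psi_t(u)=\psi(tu)$. The only difference is that you double $x_2$ at the top level and recurse on $(x_1,x_3,\ldots,x_n)$, whereas the paper doubles $x_n$ and recurses on $(x_1,\ldots,x_{n-1})$; since the claimed bound is symmetric in the indices $2,\ldots,n$, this is immaterial, and your exponent bookkeeping checks out.
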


\begin{proof}
For every complex number $\xi$ we write $|\xi|^2 = \xi \overline \xi$.

We first establish the result for $n=2$. Let us begin by eliminating $\omega_2$ by applying the triangle inequality.
\begin{align*}
|T_\psi(\cX_1, \cX_2; \omega_1, \omega_2)|
& =  \left| \sum_{x_1 \in \cX_1}  \omega_2(x_1) \sum_{x_2 \in \cX_2} \omega_1(x_2)   \psi(x_1x_2) \right| \\
&  \le \sum_{x_1 \in \cX_1} \left|  \omega_2(x_1)\right|  \left| \sum_{x_2 \in \cX_2} \omega_1(x_2)   \psi(x_1x_2) \right|.
\end{align*}
Squaring both sides and applying the Cauchy inequality gives
\begin{align*}
|T_\psi(\cX_1, \cX_2; \omega_1, \omega_2)|^2 
&  \le  X_1 \sum_{x_1 \in \cX_1}  \left| \sum_{x_2 \in \cX_2} \omega_1(x_2)   \psi(x_1x_2) \right|^2 \\
& =  X_1  \sum_{x_1 \in \cX_1}  \sum_{x_2, y_2 \in \cX_2} \omega_1(x_2) \overline{\omega_1(y_2)}  \psi(x_1(x_2-y_2)) \\
& =  X_1  \sum_{x_2, y_2 \in \cX_2} \omega_1(x_2) \overline{\omega_1(y_2)} \sum_{x_1 \in \cX_1}   \psi(x_1(x_2-y_2)) \\
& \le  X_1  \sum_{x_2, y_2 \in \cX_2} \left| \sum_{x_1 \in \cX_1}   \psi(x_1(x_2-y_2))\right|.
\end{align*}
This proves the result for $n=2$. For $n>2$ the argument is similar, though it requires the H\"older inequality.  Using that $ \omega_n(\vec{x})$ does not depend on $x_n$, we get 
\begin{equation*}
\begin{split}
|T_\psi(\cX_1&, \ldots, \cX_n;  \omega_1, \ldots, \omega_n)|\\
& \le  \sum_{x_1\in\cX_1} \ldots \sum_{x_{n-1}\in  \cX_{n-1}}  \left|\sum_{x_n \in \cX_n}
\omega_1(\vec{x}) \ldots \omega_{n-1}(\vec{x})  \psi(x_1 \ldots x_n)\right|.
\end{split}
\end{equation*}
Hence, by the Cauchy inequality we derive 
\begin{equation*}
\begin{split}
|T_\psi(\cX_1&, \ldots, \cX_n;  \omega_1, \ldots, \omega_n)|^2\\
& \le  X_1\ldots X_{n-1} \sum_{x_1\in\cX_1} \ldots \sum_{x_{n-1}\in  \cX_{n-1}}  \\
& \qquad  \qquad \qquad    \left|\sum_{x_n \in \cX_n}
\omega_1(\vec{x}) \ldots \omega_{n-1}(\vec{x})  \psi( x_1 \ldots x_n)\right|^2\\
&=  X_1\ldots X_{n-1} \sum_{x_1\in\cX_1} \ldots \sum_{x_{n-1}\in  \cX_{n-1}}  \\
& \qquad  \qquad \qquad \sum_{x_n, y_n \in \cX_n}
\omega_1(\vec{x})  \overline  \omega_1(\vec{y})  \ldots \omega_{n-1}(\vec{x})  \overline  \omega_{n-1}(\vec{y})\\
& \qquad  \qquad \qquad  \qquad  \qquad \qquad \qquad \psi\( x_1 \ldots x_{n-1} (x_n-y_n)\),
\end{split}
\end{equation*}
where $\vec{y} =  (x_1, \ldots, x_{n-1}, y_n)$ (and as before $\vec{x} = (x_1, \ldots, x_n)$). 
By changing the order of summation, we obtain
$$
|T_\psi(\cX_1, \ldots, \cX_n;  \omega_1, \ldots, \omega_n)|^2 
 \le   X_1\ldots X_{n-1} \sum_{x_n, y_n \in \cX_n}   \fT(x_n,y_n), 
 $$
where 
\begin{align*} 
 \fT(x_n,y_n) &=  \sum_{x_1\in\cX_1} \ldots \sum_{x_{n-1}\in  \cX_{n-1}}  
  \omega_1(\vec{x})  \overline  \omega_1(\vec{y})  \ldots \omega_{n-1}(\vec{x})  \overline  \omega_{n-1}(\vec{y})\\
& \qquad  \qquad \qquad  \qquad  \qquad \qquad  \quad  \psi\(x_1 \ldots x_{n-1} (x_n-y_n)\). 
\end{align*}

Now, raising both sides to the $2^{n-2}$th power and using the H{\"o}lder inequality, we obtain
\begin{equation}
\label{eq:Induct}
\begin{split}
|T_\psi(\cX_1&, \ldots, \cX_n;  \omega_1, \ldots, \omega_n)|^{2^{n-1}}\\
& \le   \(X_1\ldots X_{n-1}\)^{2^{n-2}} X_n^{2(2^{n-2}-1)} \sum_{x_n, y_n \in \cX_n} \left|\fT(x_n,y_n)\right|^{2^{n-2}}.
\end{split}
\end{equation}

We apply the inductive hypothesis to bound $\left|\fT(x_n,y_n)\right|^{2^{n-2}}$
for every $x_n, y_n \in \cX_n$. For each $ j =1, \ldots, n-1$, the weights
$$
\omega_j(\vec{x})  \overline \omega_j(\vec{y}) =
\omega_j((x_1, \ldots, x_{n-1}, x_n)) \overline \omega_j((x_1, \ldots, x_{n-1}, y_n)) 
$$
satisfy the necessary conditions with respect to $x_1, \ldots, x_{n-1}$. Applying the inductive hypothesis to the above
 character sum (where
the character $\psi(u)$ is replaced by the character $\psi_{x_n, y_n}(u) = \psi(u (x_n-y_n))$) gives

\begin{equation*}
\begin{split}
\left|\fT(x_n,y_n)\right|^{2^{n-2}}
& 
\le  X_1^{2^{n-2}-1}(X_2\ldots X_{n-1})^{2^{n-2}-2}  \sum_{x_2, y_2 \in\cX_2}  \ldots \sum_{x_{n-1}, y_{n-1} \in\cX_{n-1}} \\
 & \qquad  \left| \sum_{x_1 \in\cX_1}    \psi( x_1 (x_2-y_2) \ldots (x_{n-1}-y_{n-1}) (x_n-y_n)) \right|.
\end{split}
\end{equation*}
Substituting in~\eqref{eq:Induct} completes the inductive step and thus finishes the proof. 
 \end{proof}

\section{Proofs of bounds of exponential sums}
 
 \subsection{Proof of Theorem~\ref{thm:Bound S3}}

Recall that
\[
S(\cX, \cY, \cZ;  \alpha, \beta, \gamma) = \sum_{x \in \cX} \sum_{y \in \cY} \sum_{z \in \cZ} \alpha_x \beta_y \gamma_z \ep(xyz).
\]
We begin by eliminating $\alpha_x$ and $\gamma_z$.
\begin{align*}
 |S(\cX, \cY, \cZ;  \alpha, \beta, \gamma)|  
 &  = \left|\sum_{x \in \cX} a_x \sum_{z \in \cZ} \gamma_z \sum_{y \in \cY}  \beta_y  \ep(xyz) \right|\\
 & \le   \sum_{x \in\cX} \sum_{z\in \cZ}  \left|  \sum_{y \in \cY} \beta_{y}  \ep(xyz)\right|.
\end{align*}
We proceed as in the proof of Lemma~\ref{lem:ExpSum}  and, using the Cauchy inequality, derive 
\begin{align*}
|S(\cX, \cY, \cZ&;  \alpha, \beta, \gamma)|^2 \le XZ \sum_{x \in\cX} \sum_{z \in \cZ}
\left| \sum_{y\in \cY}   \beta_{y}  \ep(xyz)\right|^2
\\
 &= XZ \sum_{x \in\cX}  \sum_{z \in \cZ} \sum_{y_1, y_2 \in \cY} \beta_{y_1}   \overline\beta_{y_2} \ep(xz(y_1-y_2)).\end{align*}
For $\lambda \in \F_p$, we now denote 
$$
\widetilde J(\lambda) =
\sum_{\substack{(y_1, y_2, z) \in \cY^2 \times \cZ\\ (y_1-y_2) z = \lambda}} \beta_{y_1}   \overline\beta_{y_2}
$$
thus we have 
$$
|S(\cX, \cY, \cZ;  \alpha, \beta, \gamma)|^2 \le XZ \sum_{\lambda \in \F_p} \sum_{x \in\cX} \widetilde J(\lambda)   \ep(\lambda x).
$$
Clearly, $\left| \widetilde J(\lambda)\right| \le J(\lambda)$, where $J(\lambda)$ is 
the number of triples  $(y_1,y_2,z ) \in \cY^2 \times \cZ$ with the same 
value of the product $(y_1-y_2)z = \lambda \in \F_p$. It is also clear that 
\begin{equation}
\label{eq:J and N}
\sum_{\lambda \in \F_p} J(\lambda) ^2 = N(\cZ, \cY, \cY),
\end{equation}
where $N(\cU, \cV,\cW)$ is as in Definition~\ref{def:N} 
in Section~\ref{sec:AddComb}.

Thus applying~\eqref{eq:bilin} together with~\eqref{eq:J and N} 
we obtain
$$
|S(\cX, \cY, \cZ;  \alpha, \beta, \gamma)|^2 \le XZ \sqrt{p X N(\cZ, \cY, \cY) }. 
$$
Using Corollary~\ref{cor:N Fp}, we derive
$$
 S(\cX, \cY, \cZ;  \alpha, \beta, \gamma)  \ll   X^{3/4} Y Z + p^{1/4} X^{3/4} Y^{3/4} Z^{7/8}. 
$$
Our final task is to remove the first term  $X^{3/4} YZ$. 
This first term dominates the second term only when $ p \le YZ^{1/2}$. In this range, however, we deploy the classical bound following from the triangle inequality and~\eqref{eq:bilin}
\begin{align*}
|S(\cX, \cY, \cZ;  \alpha, \beta, \gamma)  |
& \le  p^{1/2} X^{1/2} Y^{1/2} Z  \le p^{1/4} (YZ^{1/2})^{1/4} X^{1/2} Y^{1/2} Z \\ 
& = p^{1/4} X^{1/2} Y^{3/4} Z^{9/8}  =
p^{1/4} X^{3/4} Y^{3/4} Z^{7/8} (Z/X)^{1/4} \\
&  \le p^{1/4} X^{3/4} Y^{3/4} Z^{7/8}.
\end{align*}
Therefore the term $X^{3/4} Y Z$ can be dropped from the final bound.

  \subsection{Proof of Theorem~\ref{thm:Bound S4}}

As in the proof of Theorem~\ref{thm:Bound S3}, after an application of  the Cauchy inequality,
we arrive to
\begin{align*}
|S(\cW, \cX, \cY, \cZ&;\alpha, \beta, \gamma, \delta)  |^2  \\
 & \le WYZ \sum_{w \in\cW} \sum_{y \in \cY}  \sum_{z \in \cZ} \sum_{x_1, x_2 \in \cX} \beta_{x_1}   \overline\beta_{x_2} \ep(wyz(x_1-x_2))\\
  & \le WYZ \sum_{x_1, x_2 \in \cX} \sum_{y \in\cY}  \sum_{z \in \cZ} 
  \left|  \sum_{w \in\cW}  \ep(wyz(x_1-x_2))\right|.
\end{align*}

Applying  the Cauchy inequality one more time, we derive
\begin{equation}
\label{eq:S4-4}
\begin{split}
|S(\cW&, \cX, \cY, \cZ;\alpha, \beta, \gamma, \delta)|^4  \\
&  \le (WYZ)^2 X^2 Y Z \sum_{x_1, x_2 \in \cX} \sum_{y \in\cY}  \sum_{z \in \cZ} 
  \left|  \sum_{w \in\cW}  \ep(wyz(x_1-x_2))\right|^2\\
&  =  W^2 X^2 Y^3Z^3  
 \sum_{w_1, w_2 \in \cX} \sum_{x_1, x_2 \in \cX} \sum_{y \in\cY} \\
 & \qquad\qquad\qquad\qquad  \sum_{z \in \cZ} 
  \ep(yz(w_1-w_2)(x_1-x_2)). 
  \end{split}
\end{equation}

We now collect together triples  $(w_1,w_2, y) \in\cW^2\times \cZ$ with the same 
value of the product $y(w_1-w_2) = \lambda \in \F_p$ and denote by $I(\lambda)$ 
the number of such  triples.   

Similarly, we  collect together   triples  $(x_1,x_2,z) \in \cX^2 \times \cZ$ with the same 
value of the product  $z(x_1-x_2) = \mu \in \F_p^*$ and denote by $J(\mu)$ 
the number of such  triples.

Hence we can rewrite~\eqref{eq:S4-4} as
\begin{equation}
\label{eq:S4 IJ}
\begin{split}
S(\cW, \cX, \cY, \cZ&;\alpha, \beta, \gamma, \delta)^4  \\
&  \ll  W^2 X^2 Y^3Z^3    \sum_{\lambda, \mu  \in \F_p} I(\lambda)  
J(\mu)   \ep( \lambda \mu).
  \end{split}
\end{equation}
Using analogues of the identity~\eqref{eq:J and N}, we note that  by Lemma~\ref{lem:Collisions} and by the assumption $p^{2/3} \ge W \ge X \ge Y \ge Z$,
$$
\sum_{\lambda \in \F_p} I(\lambda) ^2  \ll   W^3 Y^{3/2} +  W^3 Y   \ll   W^3 Y^{3/2} 
$$
and 
$$ \sum_{\mu \in \F_p} J(\mu) ^2  \ll  X^3 Z^{3/2} +  X^3 Z   \ll   X^3 Z^{3/2} .
$$
Therefore, using the bound on bilinear sums~\eqref{eq:bilin}, 
 we obtain 
\begin{align*}
S(\cW, \cX, \cY, \cZ;\alpha, \beta, \gamma, \delta)^{4}  & \ll 
 W^2 X^2 Y^3Z^3 \sqrt{ p W^3 X^3 Y^{3/2}  Z^{3/2}},\\
 & = p^{1/2} W^{7/2} X^{7/2} Y^{15/4}  Z^{15/4}, 
\end{align*}
which concludes the proof.

  \subsection{Proof of Theorem~\ref{thm:Bound T3}}
 
 First we observe that by the classical bound~\eqref{eq:bilin} we have
 $$
T(\cX, \cY, \cZ;  \rho, \sigma, \tau) \ll \sqrt{pXY} Z,
 $$
which implies Theorem~\ref{thm:Bound T3} provided that 
$$
  p^{1/8} X^{7/8} Y^{29/32}  Z^{29/32}  \ge  \sqrt{pXY} Z
$$
or, equivalently, 
$$
 X^{3/8} Y^{13/32}  Z^{-3/32}  \ge    p^{3/8}.
$$
We now note that for $X \ge Y> p^{2/3}$ we have 
$$
 X^{3/8} Y^{13/32}  Z^{-3/32}  \ge    X^{3/8} Y^{5/16}   \ge Y^{11/16} \ge p^{11/24} > p^{3/8} .
$$
For this reason, we can assume that
\begin{equation}
\label{eq:Y small}
Z \le Y \le p^{2/3}.
\end{equation}

Now, by Lemma~\ref{lem:ExpSum} (with $n=3$ and $\psi = \ep$) we have
\begin{align*}
|T(\cX&, \cY, \cZ;  \rho, \sigma, \tau)|^4  \\
  &   \le X^3Y^2Z^2  \sum_{y_1,y_2 \in \cY} 
  \sum_{z_1,z_2\in \cZ}   \left| \sum_{x \in\cX}   \ep(x(y_1-y_2)(z_1-z_2))\right|. 
\end{align*}

The number of quadruples $(y_1,y_2,z_1,z_2) \in \cY^2 \times \cZ^2$ which satisfy $(y_1-y_2)(z_1-z_2) = 0$ is at most $YZ^2 + Y^2Z \le 2 Y^2Z$. For such quadruples the inner sums is equal to $X$. Hence, 
\begin{equation}
\label{eq:NoWeights}
\begin{split}
T(\cX&, \cY, \cZ;  \rho, \sigma, \tau)^4  \\
  &   \ll X^3Y^2Z^2  \sum_{\substack{y_1,y_2 \in \cY\\ y_1\ne y_2}} 
  \sum_{\substack{z_1,z_2\in \cZ\\z_1\ne z_2}}   \left| \sum_{x \in\cX}   \ep(x(y_1-y_2)(z_1-z_2))\right|\\
 & \qquad  \qquad  \qquad  \qquad  \qquad  \qquad  \qquad  \qquad  \qquad  \qquad  + X^4Y^4Z^3.
\end{split}
\end{equation}

We now collect together quadruples   $(y_1,y_2,z_1,z_2) \in \cY^2 \times \cZ^2$ with the same 
value of the product $(y_1-y_2)(z_1-z_2) = \lambda \in \F_p^*$ and denote by $J(\lambda)$ 
the number of such quadruples. Hence we can rewrite~\eqref{eq:NoWeights} as
\begin{equation}
\label{eq:T J}
T(\cX, \cY, \cZ;  \rho, \sigma, \tau)^4     \ll X^3Y^2Z^2  \sum_{\lambda \in \F_p^*} J(\lambda)  \left| \sum_{x \in\cX}   \ep( \lambda x)\right| + X^4Y^4Z^3. 
\end{equation}
Yet another application of the Cauchy inequality leads to the bound 
$$
T(\cX, \cY, \cZ;  \rho, \sigma, \tau)^8    \ll X^6Y^4Z^4 K
\sum_{\lambda \in \F_p}  \left| \sum_{x \in\cX}   \ep( \lambda x)\right|^2
+ X^8Y^8Z^6, 
$$
where 
$$
K =   \sum_{\lambda \in \F_p^*} J(\lambda) ^2.
$$
By the orthogonality of exponential functions, we get 

\begin{equation}
\label{eq:orthog}
\sum_{\lambda \in \F_p}  \left| \sum_{x \in\cX}   \ep( \lambda x)\right|^2 
= p X.
\end{equation}
Thus we have
\begin{equation}
\label{eq:T3-8}
T(\cX, \cY, \cZ;  \rho, \sigma, \tau)^8    \ll p X^7Y^4Z^4 K  + X^8Y^8Z^6.
\end{equation}

It now remains to estimate $K$.  Clearly
$K$ is the number of solutions to the equation 
\begin{align*}
(y_1-y_2)&(z_1-z_2) = (y_3-y_4)(z_3-z_4) \ne 0, \\ \
 (y_i&,z_i) \in \cY\times \cZ, \quad i =1,2,3,4.
\end{align*}
We express $K$ in terms of multiplicative characters as follows. See~\cite[Chapter~3]{IwKow} for a background on multiplicative
characters.
$$
K= \dsum_{\substack{y_1,y_2, y_3, y_4 \in \cY\\
z_1,z_2, z_3, z_4 \in \cZ}} \frac{1}{p-1}\sum_{\chi \in \Omega} \chi\((y_1-y_2)(z_1-z_2)\) 
\overline \chi\((y_3-y_4)(z_3-z_4)\).
$$
The inner sums is over all $p-1$ distinct multiplicative characters  $\chi$ of $\F_p$. 
Simple transformations lead to the formula
$$
K = \frac{1}{p-1}\sum_{\chi \in \Omega} \left |\sum_{ y_1,y_2,\in \cY}  \chi(y_1-y_2)\right|^2
\left| \sum_{z_1,z_2 \in \cZ} \chi(z_1-z_2)\right|^2. 
$$
Now, using the Cauchy inequality, we obtain 
$$
K^2 \le \sum_{\chi \in \Omega} \left |\sum_{ y_1,y_2,\in \cY}  \chi(y_1-y_2)\right|^4 \, \sum_{\chi \in \Omega} \left| \sum_{z_1,z_2 \in \cZ} \chi(z_1-z_2)\right|^4 \le D_\times (\cY) D_\times (\cZ),
$$
where $D_\times (\cU)$ is as in Definition~\ref{def: Dx} in Section~\ref{sec:AddComb},
see~\cite[Lemma~4]{BGKS} for a similar argument. 
Recalling Corollary~\ref{cor:DU},  we obtain 
\begin{equation}
\label{eq:K crude}
K \ll \(Y^4p^{-1/2} +  Y^{13/4}\) \(Z^4p^{-1/2} +  Z^{13/4}\).
\end{equation}
One easily verifies that under the condition~\eqref{eq:Y small} we have 
$$
Y^4p^{-1/2} \le Y^{13/4}
\mand 
Z^4p^{-1/2} \le  Z^{13/4}.
$$ 
Hence, the bound~\eqref{eq:K crude} simplifies as 
\begin{equation}
\label{eq:K fin}
K \ll    Y^{13/4} Z^{13/4}.
\end{equation}
Using this bound together with~\eqref{eq:T3-8}, we obtain
$$
T(\cX, \cY, \cZ;  \rho, \sigma, \tau)^8     \ll   p X^7 Y^{29/4}  Z^{29/4} 
   +    X^8Y^8Z^6. 
$$

This implies
\[
T(\cX, \cY, \cZ;  \rho, \sigma, \tau)     \ll   p^{1/8} X^{7/8} Y^{29/32}  Z^{29/32} 
   +    XYZ^{3/4}. 
\] 
Our final task is to remove the second summand from the upper bound. The second summand dominates the first when $p^4 Z^5 \le X^4 Y^3$ or equivalently $(p/X)^4 \le Y^3 / Z^5$. In this range we apply the triangle inequality and the bilinear estimate~\eqref{eq:bilin}
\begin{align*}
|T(\cX, \cY, \cZ;  \rho, \sigma, \tau)|   
& \le p^{1/2} X^{1/2} Y^{1/2} Z \\
& = p^{1/8} X^{7/8} Y^{29/32} Z^{29/32} \left(\frac{p^{12}Z^{3}}{X^{12}Y^{13}}\right)^{1/32} \\
& \le p^{1/8} X^{7/8} Y^{29/32} Z^{29/32} \left(\frac{1}{Y^{4} Z^{12}}\right)^{1/32} \\
& \le p^{1/8} X^{7/8} Y^{29/32} Z^{29/32}. 
\end{align*}

  \subsection{Proof of Theorem~\ref{thm:Bound T4}}
 
First we permute the variables and write  
$$
T(\cW, \cX, \cY, \cZ; \vartheta,  \rho, \sigma, \tau)
= 
T(\cZ, \cW, \cX, \cY;  \tau, \vartheta,  \rho,  \sigma)
$$
Then, by Lemma~\ref{lem:ExpSum} 
(with $n=4$)  we have
\begin{equation}
\begin{split}
\label{eq:T4-8}
|T(\cW, \cX&, \cY, \cZ; \vartheta,  \rho, \sigma, \tau) |^8  \\   &\le (WXY)^6   Z^7 \sum_{w_1,w_2\in\cW}  \sum_{x_1,x_2 \in \cX} \\
&  \qquad\quad \sum_{y_1,y_2 \in \cY}   \left| \sum_{z \in\cZ}   \ep(z(w_1-w_2)(x_1-x_2)(y_1-y_2))\right|. 
\end{split}
\end{equation}

We separate the cases where in the argument in the exponential  we have $x_1 = x_2$ or $y_1 = y_2$. 
Recalling that $W\ge X \ge Y\ge Z$, we see there are $O(W^2XY^2Z + W^2X^2YZ) = O(W^2 X^2 YZ)$ such terms, each contributing 1 to the sum. We can thus rewrite~\eqref{eq:T4-8} as 
\begin{align*}
T(\cW, \cX, \cY, \cZ ; \vartheta&,  \rho, \sigma, \tau)^{8}  \\   \ll  (WXY)^6  & Z^7   
\sum_{w_1,w_2\in\cW}  \sum_{\substack{x_1,x_2 \in \cX\\x_1 \ne x_2}} \\
&  \qquad   \sum_{\substack{y_1,y_2 \in \cY\\y_1 \ne y_2}}
   \left| \sum_{z \in\cZ}   \ep(z(w_1-w_2)(x_1-x_2)(y_1-y_2))\right|   \\
& \qquad \qquad \qquad \qquad \qquad  \qquad \qquad\qquad+ (WXZ)^8 Y^7. 
\end{align*}

We now collect together triples  $(w_1,w_2, z) \in\cW^2\times \cZ$  with the same 
value of the product $z(w_1-w_2) = \lambda \in \F_p$ and denote by $I(\lambda)$ 
the number of such triples.   

Similarly, we  collect together quadruples   $(x_1,x_2,y_1,y_2) \in \cX^2 \times \cY^2$ with the same 
value of the product $(x_1-x_2)(y_1-y_2) = \mu \in \F_p^*$ and denote by $J(\mu)$ 
the number of such quadruples. 

Hence we obtain the following version of~\eqref{eq:S4 IJ}
\begin{align*}
T(\cW, \cX&, \cY, \cZ; \vartheta, \rho, \sigma, \tau)^{8}  \\
&   \ll   (WXY)^6   Z^7 \sum_{\mu \in \F_p^*} 
J(\mu)  \left|  \sum_{\lambda \in \F_p} I(\lambda)  \ep( \lambda \mu)\right|  + (WXZ)^{8}Y^{7}  
\\
&   \ll    (WXY)^6   Z^7 \sum_{\mu \in \F_p^*}   \sum_{\lambda \in \F_p} 
J(\mu)  \eta_\mu I(\lambda)  \ep( \lambda \mu)  +  (WXZ)^{8}Y^{7} , 
\end{align*}
%%PROOFS Comma after  $\mu \in \F_p^*$
where $\eta_\mu$,  $\mu \in \F_p^*$, is a complex number with $|\eta_\mu|=1$ 
(which can be expressed via the argument of the inner sum). 
We note that  by Lemma~\ref{lem:Collisions} and by the assumption $p^{2/3} \ge W \ge X \ge Y \ge Z$,
$$
\sum_{\lambda \in \F_p} I(\lambda) ^2  \ll   W^3 Z^{3/2} +  W^3 Z   \ll   W^3 Z^{3/2} .
$$
Similarly to~\eqref{eq:K fin} we also have 
$$ \sum_{\mu \in \F_p^*} J(\mu) ^2  \ll    X^{13/4} Y^{13/4}.
$$
Therefore, using the bound on bilinear sums~\eqref{eq:bilin}, 
 we obtain 
$$
T(\cW, \cX, \cY, \cZ; \vartheta, \rho, \sigma, \tau)^{8}     \ll p^{1/2}  W^{15/2} (XY)^{61/8}   Z^{31/4} 
+ (WXZ)^{8}Y^{7}  ,
$$
which implies
\[
T(\cW, \cX, \cY, \cZ; \vartheta, \rho, \sigma, \tau)  \ll p^{1/16}  W^{15/16} (XY)^{61/64}   Z^{31/32} 
+ W X Y^{7/8} Z.
\]
Our final task is to remove the second summand. The process is identical to that at the end of Theorem~\ref{thm:Bound S3} and of Theorem~\ref{thm:Bound T3}. The second summand dominates when $p^4 Y^5 \le W^4 X^3 Z^2$. However, in this range we apply the triangle inequality and the bilinear bound~\eqref{eq:bilin} to obtain
\[
|T(\cW, \cX, \cY, \cZ; \vartheta, \rho, \sigma, \tau)| \le p^{1/2} W^{1/2} X^{1/2} YZ.
\]
The condition $p^4 Y^5 \le W^4 X^3 Z^2$ implies
\[
p^{1/2} W^{1/2} X^{1/2} YZ \le p^{1/16}  W^{15/16} (XY)^{61/64}   Z^{31/32} 
\]
and the proof is completed.

\section{Proofs of expansion of polynomial images}

\subsection{General approach}

Our proof strategy is modeled from that of Garaev~\cite{Gar1}. Namely, each time 
we consider the number of solutions to an appropriately chosen equation 
and use our new bounds of exponential sums to obtain an asymptotic formula for the number
of its solutions. On the other hand, 
using the specially designed structure of this equation,
we are able to explicitly produce a large family of solutions. Both the  asymptotic formula
and the size of the family of explicit solutions depend on the cardinalities of the image sets
of interest and lead to the desired results.

  \subsection{Proof of Theorem~\ref{thm:Image1}}
  
We let  $\cE = \F_p\setminus \( \cA\cB\cC  + \cD\)$ be of cardinality $E$ and  
define $N$ as the number of solutions to the equation
$$
abc+d -e = 0, \qquad (a,b,c,d,e) \in \cA\times \cB \times\cC\times \cD \times\cE, 
$$
for which we obviously  have $N = 0$. On the other hand, using the orthogonality of  exponential functions 
we express  $N$ as
$$
N = \ssum_{(a,b,c,d,e) \in \cA\times \cB \times\cC\times \cD \times\cE} \frac{1}{p} \sum_{\lambda \in \F_p}
\ep\(\lambda(abc +d -e)\).
$$
Changing the order of summation and separating the term $ABCDE/p$ corresponding to $\lambda = 0$, 
we obtain 
$$
N =  \frac{ABCDE}{p}  + \frac{1}{p} \sum_{\lambda \in \F_p^*}  \sum_{(a,b,c) \in \cA\times \cB \times\cC}
\ep(\lambda abc ) \sum_{d \in   \cD }\ep(\lambda d)  \sum_{e \in   \cE} \ep(-\lambda e). 
$$
Hence, applying Theorem~\ref{thm:Bound S3}, we derive 
$$
N - \frac{ABCDE}{p}    \ll p^{-3/4} A^{3/4} B^{3/4}  C^{7/8}   \sum_{\lambda \in \F_p^*} \left|
 \sum_{d \in   \cD }\ep(\lambda d)\right|  \left| \sum_{e \in   \cE} \ep(\lambda e)\right|.
 $$
Now applying the Cauchy inequality and then  analogues  of the orthogonality relation~\eqref{eq:orthog}, we obtain 
\begin{equation}
\begin{split}
\label{eq:N ABCDE}
&N - \frac{ABCDE}{p} \\
 &\quad  \ll p^{-3/4} A^{3/4} B^{3/4}  C^{7/8}   \(\sum_{\lambda \in \F_p} \left|
 \sum_{d \in   \cD }\ep(\lambda d)\right|^2  \sum_{\lambda \in \F_p}  \left| \sum_{e \in   \cE} \ep(\lambda e)\right|^2\)^{1/2}\\
 &\quad = p^{-3/4} A^{3/4} B^{3/4}  C^{7/8}    (pD)^{1/2} (pE)^{1/2}\\
 &\quad  =  p^{1/4} A^{3/4} B^{3/4}  C^{7/8}  D^{1/2} E^{1/2}.
\end{split}
\end{equation}
Recalling that $N=0$  we obtain
$$
ABCDE  \ll  p^{5/4} A^{3/4} B^{3/4}  C^{7/8}  D^{1/2} E^{1/2},
 $$
which concludes the proof of the asymptotic formula.

For the lower bound,  we consider the number $J(\eta)$ of solutions to the equation
$$
abc+d = \eta, \qquad (a,b,c,d) \in \cA\times \cB \times\cC\times \cD. 
$$
Clearly 
\begin{equation}
\label{eq:Sum J}
 \sum_{\eta \in \F_p} J(\eta) = ABCD.
\end{equation}
Furthermore, by the Cauchy inequality
\begin{equation}
\label{eq:Cauchy}
\(\sum_{\eta \in \F_p} J(\eta)\)^2 \le  \#\(\cA\cB \cC +\cD\) \sum_{\eta \in \F_p} J(\eta)^2
=  \#\(\cA\cB \cC +\cD\)  J, 
\end{equation}
where $J$ is the  number  of solutions to the equation
$$
a_1b_1c_1+d_1  = a_2b_2c_2+d_2, \quad (a_\nu,b_\nu,c_\nu, d_\nu) \in \cA\times \cB \times\cC\times \cD, \ \nu = 1,2.
$$
As before, we write 
\begin{align*}
J & =  \ssum_{\substack{(a_\nu,b_\nu,c_\nu, d_\nu) \in \cA\times \cB \times\cC\times \cD\\\nu =1,2}} 
\frac{1}{p} \sum_{\lambda \in \F_p}
\ep\(\lambda(a_1b_1c_1 - a_2b_2c_2+d_1 -d_2)\)\\
& = \frac{1}{p} \sum_{\lambda \in \F_p}
\left| \ssum_{(a,b,c) \in \cA\times \cB \times\cC} \ep\(\lambda abc\)\right|^2 
 \left| \sum_{d \in   \cD }\ep(\lambda d)\right|^2\\
 & =  \frac{A^2B^2C^2D^2}{p} + \frac{1}{p} \sum_{\lambda \in \F_p^*}
\left| \ssum_{(a,b,c) \in \cA\times \cB \times\cC} \ep\(\lambda abc\)\right|^2 
 \left| \sum_{d \in   \cD }\ep(\lambda d)\right|^2.
\end{align*}
We now recall Theorem~\ref{thm:Bound S3} and derive 
\begin{align*}
J &  \ll   \frac{A^2B^2C^2D^2}{p} + p^{-1/2} A^{3/2} B^{3/2}  C^{7/4} \sum_{\lambda \in \F_p}
 \left| \sum_{d \in   \cD }\ep(\lambda d)\right|^2\\
 & = \frac{A^2B^2C^2D^2}{p} + p^{1/2} A^{3/2} B^{3/2}  C^{7/4} D.
\end{align*}
Substituting this bound in~\eqref{eq:Cauchy} and recalling~\eqref{eq:Sum J}
we obtain the desired lower bound. 

  \subsection{Proof of Theorem~\ref{thm:Image2}}
  
We proceed as   in the proof of Theorem~\ref{thm:Image1}.

  Let $\cE = \F_p\setminus \((\cA+\cB+\cC)^3 + \cD\)$ be of cardinality $E$. 
Then for the number $N$ of solutions to the equation
$$
(a+b+c)^3 +d -e = 0, \qquad (a,b,c,d,e) \in \cA\times \cB \times\cC\times \cD \times\cE
$$
we obviously have $N = 0$.
On the other hand, using the orthogonality of  exponential functions we express  $N$ as
$$
N = \ssum_{(a,b,c,d,e) \in \cA\times \cB \times\cC\times \cD \times\cE} \frac{1}{p} \sum_{\lambda \in \F_p}
\ep(\lambda((a+b+c)^3 +d -e).
$$
Changing the order of summation and separating the term $ABCDE/p$ corresponding to $\lambda = 0$, 
we obtain 
\begin{align*}
N = & \frac{ABCDE}{p}\\
&\quad  + \frac{1}{p} \sum_{\lambda \in \F_p^*}  \sum_{(a,b,c) \in \cA\times \cB \times\cC}
\ep\(\lambda(a+b+c)^3\) \sum_{d \in   \cD }\ep(\lambda d)  \sum_{e \in   \cE} \ep(-\lambda e).
\end{align*}
We now note that the sum over triples $(a,b,c)$ is of the same types as in~\eqref{eq:cube quart}
hence Theorem~\ref{thm:Bound T3} applies. 
Recalling that $N=0$ we arrive to the bound
$$
ABCDE  \ll p^{1/8} A^{7/8} B^{29/32}  C^{29/32}  \sum_{\lambda \in \F_p^*} \left|
 \sum_{d \in   \cD }\ep(\lambda d)\right|  \left| \sum_{e \in   \cE} \ep(\lambda e)\right|.
$$
Now applying the Cauchy inequality and then  analogues  of~\eqref{eq:orthog}, we obtain 
\begin{align*}
ABCDE & \ll p^{1/8} A^{7/8} B^{29/32}  C^{29/32}  (pD)^{1/2} (pE)^{1/2}\\
& =  p^{9/8} A^{7/8} B^{29/32}  C^{29/32} D^{1/2} E^{1/2}.
\end{align*}
The asymptotic formula follows.

For the lower bound,  we consider the number $J(\eta)$ of solutions to the equation
$$
(a+b+c)^3+d = \eta, \qquad (a,b,c,d) \in \cA\times \cB \times\cC\times \cD, 
$$
and again proceed as in the proof of Theorem~\ref{thm:Image1} however using 
 Theorem~\ref{thm:Bound T3}  instead of  Theorem~\ref{thm:Bound S3}. 
 In particular, for 
$$
 J =  \sum_{\eta \in \F_p} J(\eta)^2
$$
we obtain 
$$
J \ll \frac{A^2B^2C^2D^2}{p} + p^{1/4} A^{7/4}  B^{29/16}  C^{29/16} D,
$$
from which the desired result follows. 

\subsection{Proof of Theorem~\ref{thm:UV}}

Let $N$ be the number of solutions to the equation
\begin{equation}
\label{eq:Eq bcduv}
u/(bc) + d = v, \qquad (b,c,d,u,v) \in  \cB \times \cC \times  \cD \times \cU \times \cV .
\end{equation}

Clearly the quintuples  $(b,c,d,abc,a+d)$ with 
$(a,b,c,d) \in \cA \times \cB \times \cC \times  \cD$ are pairwise
distinct solutions to~\eqref{eq:Eq bcduv}.
Hence
\begin{equation}
\label{eq:Jlow 2}
N \ge ABCD.
\end{equation}
On the other hand, similarly to~\eqref{eq:N ABCDE}, we  infer from Theorem~\ref{thm:Bound S3} that
\begin{equation}
\label{eq:Jasymp 2}
N =  BCDUV/p +  O\(p^{1/4} U^{3/4} B^{3/4}C^{7/8} D^{1/2} V^{1/2}\) .
\end{equation}
Hence, we see from comparing~\eqref{eq:Jlow 2}  and~\eqref{eq:Jasymp 2}, that  either 
$$
ABCD \ll BCDUV/p
$$
 or 
$$
ABCD \ll  p^{1/4} U^{3/4} B^{3/4}C^{7/8} D^{1/2} V^{1/2}
$$
and we obtain the result.

\section{Comments} 

We now recall a result of  Karatsuba~\cite{Kar1} 
(see also~\cite[Chapter~VIII, Problem~9]{Kar2})
that for the character sums in~\eqref{eq:double char}
we have 
$$
 \max _{\chi \in \Omega^*}  
\left| \sum_{\substack{v \in \cV \\ w \in \cW}} \chi(v-w)\right| \ll 
 V^{1-1/2r}  W q^{1/4r} +  
V^{1-1/2r} W^{1/2} q^{1/2r}, 
$$
where the implied constant depends only on the 
arbitrary parameter $r=1,2, \ldots$. 
Used instead of~\eqref{eq:double char}, this allows us to improve 
Lemma~\ref{lem:N Fq} for some ranges of $U,V,W$, however not 
in ranges which are relevant for our applications to the trilinear  sums~\eqref{eq:Sum S}. 

Furthermore,~\cite{Pet} also contains the bound
\[
D_\times(\cU) \ll \frac{U^8}{p} + p^{2/3} U^{16/3},
\]
which is better than that of Corollary~\ref{cor:DU} when $U \ge p^{4/7}$. It turns out, 
however, that in this range applying the classical bound works better. 

Finally, we note that some of our bounds can be refined and expressed in terms of $\cL^2$-norms
of  some weights instead of their $\cL^\infty$-norms.

\section*{Acknowledgements}

The authors would like to thank Moubariz Garaev  and Misha Rudnev for their 
comments and suggestions.  The authors are also very grateful to 
the referee for the careful reading of the manuscript and 
valuable comments.

During the preparation of this paper, the first  author was supported by the NSF DMS Grant~1500984, 
the second  author was supported by ARC Grant~DP140100118.

\end{document}